\documentclass{amsart}
\usepackage[utf8]{inputenc}
\usepackage{fullpage}
\usepackage{amssymb}
\usepackage{hyperref}
\hypersetup{colorlinks=true, linkcolor=blue, filecolor=blue, urlcolor=blue, citecolor=blue}
\usepackage{enumerate}

\newcommand{\NN}{\mathbb{N}}
\newcommand{\ZZ}{\mathbb{Z}}
\newcommand{\QQ}{\mathbb{Q}}
\newcommand{\RR}{\mathbb{R}}
\newcommand{\CC}{\mathbb{C}}

\newcommand{\cD}{\mathcal{D}}
\newcommand{\cP}{\mathcal{P}}

\newcommand{\PGL}{\mathrm{PGL}}
\newcommand{\PU}{\mathrm{PU}}
\newcommand{\PO}{\mathrm{PO}}
\newcommand{\SU}{\mathrm{SU}}
\newcommand{\SL}{\mathrm{SL}}
\newcommand{\GL}{\mathrm{GL}}

\newcommand{\bs}{\backslash}
\newcommand{\ma}{\mathfrak{a}}
\newcommand{\eps}{\varepsilon}

\newcommand{\rk}{\operatorname{rk}}
\newcommand{\diag}{\operatorname{diag}}
\newcommand{\id}{\operatorname{id}}
\newcommand{\den}{\operatorname{den}}
\newcommand{\opint}{\operatorname{int}}
\newcommand{\dist}{\operatorname{dist}}
\newcommand{\comp}{\operatorname{comp}}
\newcommand{\Hom}{\operatorname{Hom}}
\newcommand{\proj}{\operatorname{proj}}

\renewcommand{\geq}{\geqslant}
\renewcommand{\leq}{\leqslant}

\usepackage[dvipsnames]{xcolor}

\newtheorem{theorem}{Theorem}
\newtheorem{proposition}{Proposition}
\newtheorem{lemma}{Lemma}

\title{The sup-norm problem for automorphic cusp forms of $\PGL(n,\ZZ[i])$}
\author{Péter Maga}
\author{Gergely Zábrádi}
\date{}

\address{Alfréd Rényi Institute of Mathematics, Hungarian Academy of Sciences, POB 127, Budapest H-1364, Hungary}
\email{magapeter@gmail.com, gergely.zabradi@ttk.elte.hu}
\address{Eötvös Loránd University, Institute of Mathematics, Pázmány Péter sétány 1/C, Budapest H-1117, Hungary}
\email{gergely.zabradi@ttk.elte.hu}

\begin{document}

\keywords{automorphic forms in higher rank, sup-norm problem, Hecke operators, trace formula, diophantine approximation, amplification}

\subjclass[2020]{11F55, 11F72, 11D75}

\maketitle

\begin{abstract}
Let $\phi$ be an $L^2$-normalized Hecke--Maa{\ss} cusp form for $\PGL_n(\ZZ[i])$ on the locally symmetric space $X:=\PGL_n(\ZZ[i])\bs \PGL_n(\CC) / \PU_n$. If $\Omega$ is a compact subset of $X$, then we prove the bound $\|\phi|_{\Omega}\|_{\infty}\ll_{\Omega} \lambda_{\phi}^{n(n-1)/4-\delta}$ for some $\delta>0$ depending only on $n$, where $\lambda_{\phi}$ is the Laplace eigenvalue of $\phi$.
\end{abstract}

\section{Introduction}

Given a Riemannian locally symmetric space $X=\Gamma \bs S$, it is a classical analytic problem to give pointwise bounds for ($L^2$-normalized) eigenfunctions $F$ of the algebra of invariant differential operators $\cD(S)$, uniformly in $S$ in  terms of the Laplace eigenvalue $\lambda_F$ of $F$. If $X$ is compact, then Sarnak \cite{Sarnak2004L} proved the baseline bound
\begin{equation}\label{eq:sarnak-baseline}
\|F\|_{\infty} \ll_X \lambda_F^{\frac{\dim(X)-\rk(X)}{4}}.
\end{equation}
The exponent on the right-hand side is known to be sharp in general. It is also known in some special cases that if $X$ is not compact, then $\|F\|_{\infty}$ might be considerably larger (see \cite{BrumleyTemplier2020}), but \eqref{eq:sarnak-baseline} still holds if $F$ is retsriced to compact subsets of $X$. The sup-norm problem in the theory of automorphic forms asks if the exponent of \eqref{eq:sarnak-baseline} can be strengthened if $X$ is an arithmetic manifold and $F$ is an eigenfunction not only of $\cD(S)$ but of the full Hecke algebra of $X$.

An important motivation comes from quantum mechanics. Classical mechanics interprets a freely moving particle as a geodesic flow. The quantum mechanical interpretation of the same object is an $L^2$-normalized linear combination of eigenstates. Since the geodesic flow is ergodic with respect to the Liouville measure on the tangent space, the correspondence principle of quantum mechanics suggests that the masses of the eigenstates reproduce the invariant measure in the high-energy limit. Bounds on the sup-norm (or in general, the $L^p$ norm for any $p>2$) of eigenstates control their mass concentration, and hence are in connection with the quantum unique ergodicity conjecture of Rudnick and Sarnak \cite{RudnickSarnak1994}. Other important connections of sup-norm bounds are towards the multiplicity problem, nodal domains of automorphic forms and bounds for $L$-functions, see e.g. \cite{Sarnak2004L}, \cite{GhoshReznikovSarnak}, \cite{BlomerHarcos2010}.

In improving \eqref{eq:sarnak-baseline}, there are (at least) two independent directions of research: one is to find as strong power-savings as possible among special circumstances, typically in rank one (see e.g. \cite{IwaniecSarnak1995} and \cite{BlomerHarcosMagaMilicevic2020}), the other one is to find any power-saving in higher rank or among as general circumstances as possible (see e.g. \cite{BlomerPohl2016}, \cite{BlomerMaga2015}, \cite{BlomerMaga2016}). We pick up the thread in the second theme, where the current limitation of our knowledge is an unpublished a manuscipt \cite{Marshall2014+}, which proves power-saving for a wide class of symmetric spaces, more specifically, for arithmetic quotients of quasi-split groups with the exception of the type $\SU(n,n-1)$. (We note that even though \cite{Marshall2014+} is not peer-reviewed, it is widely accepted in the community as being correct.)

In these notes, we introduce a new method, based and improved on that of \cite{BlomerMaga2016}, to tackle the sup-norm problem. As a test application, we prove a saving over \eqref{eq:sarnak-baseline} in the case of the locally symmetric space
\[
X:=\Gamma \bs G / K,\qquad \Gamma:=\PGL_n(\ZZ[i]),\qquad G:=\PGL_n(\CC), \qquad K:=\PU_n.
\]
Admittedly, our result follows from the main result of \cite{Marshall2014+} (see especially \cite[Corollary~1.4]{Marshall2014+}), but we strongly believe that the novelty of the method deserves attention, and might have the potential to address the type $\SU(n,n-1)$, the exceptional case in Marshall's work.

All along the paper, we think of $n\geq 2$ as being fixed, in particular, all implied constants below are allowed to depend on $n$.

Before stating our main result, we fix some notations. Since it is convenient to work with matrices instead of their projectivization, we shall realize elements of $G$ as the rightmost nonzero entry of the bottom row is $1$. We will also talk about points of $X$ or $G/K$ as matrices, by which we mean any matrix which represents them. Let $A$ stand for the diagonal subgroup of $G$ consisting of positive real entries and let $N$ be the upper-triangular unipotent subgroup. Then the Iwasawa decomposition reads as $G=NAK$. We write $\ma$ for the Lie algebra of $A$, $\ma^*$ for its dual and $\ma^*_{\CC}$ for the complexification of $\ma^*$. Fixing bases in $\ma^*$ and $\ma^*_{\CC}$, we may view them as appropriate subsets of $\RR^n$ and $\CC^n$, respectively. Let $W$ stand for the Weyl group, $\Sigma$ for the set of roots and $\Sigma^+$ for the set of positive roots corresponding to $N$. For $\alpha\in\Sigma$, let $m(\alpha)$ be the real dimension of the corresponding root space. For $\lambda\in\ma^*$, define
\[
D(\lambda):=\prod_{\alpha\in\Sigma^+} (1+|\langle \alpha,\lambda \rangle|)^{m(\alpha)},
\]
where $\langle \cdot, \cdot \rangle$ is induced by the Killing form.

In our special situation, define, for $1\leq j\leq n$, the function $e_j$ on $\ma$ as $e_j(\diag(a_1,\dots,a_n))=a_j$, and then a set of positive roots is given by $e_j-e_k$ with $1\leq j<k\leq n$. The corresponding root space is spanned by the matrices $E_{jk}$ and $iE_{jk}$, where $E_{jk}$ is the matrix which $1$ at position $j,k$ and otherwise zero, hence $m(e_j-e_k)=2$.

A Hecke--Maa{\ss} cusp form $\phi$ on $X$ comes with archimedean Langlands parameters $\mu_{\phi}=(\mu_1,\dotsc,\mu_n)\in \ma^*_{\CC}/W\subset \CC^n/W$ such that $\mu_1+\dotsc+\mu_n=0$ and $\{\mu_1,\dotsc,\mu_n\}=\{\overline{\mu_1},\dotsc,\overline{\mu_n}\}$.
In our parametrization, $(\mu_1,\dotsc,\mu_n)\in\RR^n$ corresponds to the tempered spectrum, and we have that $\Im \mu_1,\dotsc, \Im \mu_n = O(1)$. Let $\mu_{\phi}^*$ stand for $\Re\mu_{\phi}$. Since for the Laplace eigenvalue $\lambda_{\phi}$ of $\phi$,
\[
\lambda_{\phi} \asymp 1 + \|\mu_1\|^2 + \dotsc + \|\mu_n\|^2
\]
holds, we have
\[
D(\mu_{\phi}^*) \ll \lambda_{\phi}^{\frac{n(n-1)}{2}}.
\]

The main result of this paper is the following.
\begin{theorem}\label{thm:main} For any $n\geq 2$, there exists some $\delta=\delta(n)>0$ with the following property. For any $L^2$-normalized Hecke--Maa{\ss} cusp form $\phi$ on $X$ and any compact $\Omega\subset X$,
\[
\|\phi|_{\Omega}\|_{\infty} \ll_{\Omega} D(\mu_{\phi}^*)^{\frac{1}{2}-\delta}.
\]
In particular,
\[
\|\phi|_{\Omega}\|_{\infty} \ll_{\Omega} \lambda_{\phi}^{\frac{n(n-1)}{4}(1-2\delta)}.
\]
\end{theorem}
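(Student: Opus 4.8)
The plan is to follow the amplification method in the spirit of \cite{BlomerMaga2016}, pushed through in the present context of $\PGL_n$ over the Gaussian integers. First I would reduce the problem, via the standard pre-trace / Bergman-kernel inequality, to bounding the archimedean test function applied to $\phi$: choosing an even, nonnegative bump $k$ on $\ma^*$ of width $\asymp 1$ concentrated near $\mu_\phi^*$, the point-evaluation $|\phi(g)|^2$ at $g\in\Omega$ is controlled by $\sum_{\gamma\in\Gamma} |K_k(g,\gamma g)|$, where $K_k$ is the corresponding spherical kernel on $G=\PGL_n(\CC)$. The diagonal term contributes $D(\mu_\phi^*)^{1/2}$ (this is exactly the Sarnak baseline \eqref{eq:sarnak-baseline} in disguise), so the whole game is to show the off-diagonal contribution, after amplification, is smaller by a power of $D(\mu_\phi^*)$. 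I would combine this with an amplifier built from Hecke eigenvalues at split primes $p$ (so $p = \pi\bar\pi$ in $\ZZ[i]$): the usual Hecke relations for $\GL_n$ give that a suitable weighted sum over primes $p\le L$ of $|\lambda_\phi(p)|^2$ (or of $|\lambda_\phi(p^2)|^2$, to handle the case where first powers are small) is $\gg L$, so that after introducing the amplified kernel $\sum_{\ell} x_\ell K_k(g, T_\ell g)$ we gain a factor $\asymp L$ over the trivial bound, at the cost of summing over the larger Hecke-translated lattice $\bigcup_{\ell \le L^2} T_\ell \cdot \Gamma$.

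The heart of the argument — and the step I expect to be the main obstacle — is the geometric/Diophantine count: bounding, for each Hecke operator $T_\ell$ with $\ell \asymp L^2$, the number of $\gamma \in M_n(\ZZ[i])$ with $\det\gamma$ of norm $\ell$ (up to units and scaling) such that $g^{-1}\gamma g$ lies in a shrinking neighbourhood of the identity — more precisely, such that the spherical kernel $K_k(g,\gamma g)$ is non-negligible, which forces $g^{-1}\gamma g$ to be within distance $\asymp D(\mu_\phi^*)^{-\epsilon_0}$ (say) of $K$ in the relevant directions. Here one stratifies by how close $\gamma$ is to a scalar matrix: scalar (central) $\gamma$ give a controlled "main" off-diagonal contribution that is already power-saving, and for non-scalar $\gamma$ one needs a counting lemma showing that the number of Gaussian-integer matrices of bounded height and fixed determinant-norm landing in such a small tube is $O(L^{2A} \cdot (\text{power saving in } \lambda_\phi))$ uniformly for $g$ in the fixed compact set $\Omega$. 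This is where the diophantine-approximation input (the paper lists it among its keywords) enters: one wants to say that an algebraic condition "$g^{-1}\gamma g$ almost fixes a flag / almost commutes with $A$" together with integrality forces $\gamma$ into a thin set. Over $\ZZ[i]$ rather than $\ZZ$ the counting is genuinely a count of lattice points in $\CC$-dimensional regions, so the exponents and the shape of the archimedean localization domain (governed by $D(\mu_\phi^*)$ and the Harish-Chandra spherical function asymptotics on $\PGL_n(\CC)$) must be tracked carefully; I expect the technical core of \cite{BlomerMaga2016} has to be re-proved with complex places.

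Assuming that counting lemma, the endgame is bookkeeping: one balances the amplifier length $L$ against the savings from the geometric count. The diagonal plus central terms give roughly $D(\mu_\phi^*)^{1/2}$ times $L^{-1}$ (the amplification gain), while the generic off-diagonal terms give $D(\mu_\phi^*)^{1/2}$ times $L^{B}$ times $D(\mu_\phi^*)^{-\eta}$ for some $B,\eta>0$ coming from the counting estimate; choosing $L = D(\mu_\phi^*)^{\eta/(B+1)}$ equalizes these and yields $\|\phi|_\Omega\|_\infty^2 \ll D(\mu_\phi^*)^{1/2 - 2\delta}$ for an explicit $\delta = \delta(n)>0$. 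The final assertion then follows from $D(\mu_\phi^*) \ll \lambda_\phi^{n(n-1)/2}$ recorded in the introduction. I would also need, along the way, a spectral localization / cusp-form positivity argument to discard the continuous spectrum and a standard Sobolev-type argument to pass from $L^2$-normalization to the pointwise pre-trace inequality on the compact set $\Omega$; the novelty the authors advertise presumably lies in how the Hecke translates are organized on the group side, so I would expect the proof to introduce some new device — perhaps working with the Bruhat or Iwasawa coordinates of $\gamma$ directly — to make the $\SU(n,n-1)$-style obstruction visibly absent here.
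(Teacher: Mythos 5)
Your outline reproduces the strategic skeleton the paper actually uses --- amplification over split Gaussian primes, the pre-trace reduction of $|\phi(g)|^2$ to a Hecke-translate lattice count near $K$ (this is exactly \eqref{eq:key-inequality}), and the endgame balancing of the amplifier length $L$ against $D(\mu_\phi^*)$ --- and several details (split $p=\pi\bar\pi$, using higher powers of the Hecke operators to avoid small-eigenvalue degeneracy, passing from $D(\mu_\phi^*)^{1/2}$ to $\lambda_\phi^{n(n-1)/4}$) are on target. But your proposal treats the counting estimate as a black box (``assuming that counting lemma''), and that estimate is where the entire difficulty and the entire novelty of the paper live. It is not a matter of transplanting \cite{BlomerMaga2016} with exponents retracked for complex places: as the end of Section~2 of the paper explains, $\PU_n$ is a strictly thicker subgroup of $\PGL_n(\CC)$ than $\PO_n$ is of $\PGL_n(\RR)$ (half the dimension versus less than half), so a direct transplant of the real-place count would actually be lossy.

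The mechanisms the paper uses to close this gap are not ones your outline anticipates. First, a polarization lemma over $\ZZ[i]$ at a split prime (Lemma~\ref{lemma:polarization}): using $p=\pi\bar\pi$ and the existence of an integer representative $a$ with $y\equiv ax\bmod\pi^\rho$, it converts the Hermitian inner products $x^*Ay$ into divisibility constraints on \emph{rational} integers, which then forces an angular separation between candidate columns of $\gamma$. Second, a matrix-exchange scheme (Section~\ref{sec:exchanging}) that pigeonholes over the amplifier length and replaces the transcendental $Q$ by $Q_1$ (algebraic), then $Q_2$ (entries in $\QQ(i)$), then $Q_3$ (diagonal and rational), with heights controlled via effective computability (Lemma~\ref{lemma:effective-computability}); this upgrades the approximate condition in $S(Q,\pi^\nu,\pi'^\nu,M)$ to an exact one, for which the polarization lemma applies. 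A consequence invisible from your sketch is that the cross terms with two distinct split primes $\pi\neq\pi'$ vanish outright (Lemma~\ref{lemma:spec-lemma-two-primes}), which is what lets the paper drop the Linnik-type zero-repulsion input that the $\PGL_n(\ZZ)$ argument needed. Your guess that the missing device is a direct Bruhat/Iwasawa coordinate analysis of $\gamma$ is not what happens.
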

Since $n(n-1)/4=(\dim(X)-\rk(X))/4$, this is a saving over \eqref{eq:sarnak-baseline}. We note that the case $n=2$ was solved in \cite{BlomerHarcosMilicevic2016}.

Our method closely follows the one introduced in \cite{BlomerMaga2016}, however, the counting problem in this situation is more challenging (because of the too large maximal compact subgroup of $G$, as it will be briefly exposed below). This requires an improvement in the counting techniques, which is the heart of this paper (vaguely speaking, a combination of Lemma~\ref{lemma:spec-lemma-two-primes} and Lemma~\ref{lemma:constructQ3}). As a by-product, there is no need of Linnik type theorems about small primes in arithmetic progressions (i.e. zero repulsion of $L$-functions) any more.

We remark that with some work, the jungle of $O(1)$'s below (particularly in Section~\ref{sec:exchanging}) can be made explicit, hence an explicit subconvexity saving is available, see the work \cite{Gillman} over the rational field (in fact, now it is easier, since the reference to Linnik type theorems is removed). We also note that the implied constant (which depends on $\Omega$) can be also made fully explicit, so the method in principle is effective, e.g. we do not need any reference to Siegel zeroes.

\subsection*{Acknowledgements} We thank Valentin Blomer for useful discussions about the topic of the paper. We also thank Gergely Harcos and Vitezslav Kala for discussions about some closely related questions.

The research towards this work was supported by NKFIH (National Research, Development and Innovation Office) Grants KKP~133819 (PM), FK~135218 (PM), FK~127906 (GZ), K~135885 (GZ), ELKH (Eötvös Loránd Research Network) Grant SA-71/2021 (PM \& GZ), and the MTA R\'enyi Int\'ezet Lend\"ulet Automorphic Research Group (PM \& GZ).

\section{Reduction to a counting problem}

In this section, we reduce the problem to a matrix counting problem, following the lines of \cite{BlomerMaga2015} and \cite{BlomerMaga2016}, where $\PGL_n(\ZZ) \bs \PGL_n(\RR) / \PO_n$ was treated. Since the spherical Hecke algebra is isomorphic in that case to ours at the archimedean and all relevant non-archimedean places, almost everything follows verbatim, so we give only a brief exposition, with some emphasis on the small difference coming from the fact that the maximal compact subgroup is the orthogonal group in the real case and the unitary group in the complex case.

Let $f_{\mu_{\phi}}:K\bs G / K \to\CC$ be the spherical function constructed in \cite[pp.1276--1277]{BlomerMaga2016} (here we utilize that $A,\ma^*,\ma^*_{\CC}$ are the same for $\PGL_n(\RR)$ and $\PGL_n(\CC)$). Denoting by $\tilde{f}_{\mu_{\phi}}$ its spherical transform, for any $x,y\in G$, we have the pre-trace formula
\begin{equation}\label{eq:pre-trace-formula}
\int \tilde{f}_{\mu_{\phi}}(\mu_\varpi)  \overline{F_{\varpi}(x)} F_{\varpi}(y)\ d\varpi = \sum_{\gamma\in \Gamma} f_{\mu_{\phi}}(x^{-1}\gamma y),
\end{equation}
where the integral on the left is meant over the full spectrum of $\Gamma \bs G$.

The idea of amplification in the current setup can be summarized as follows. Assume we want to estimate our form $\phi$ at a point $g\in \Omega$. Then in \eqref{eq:pre-trace-formula}, we set $x:=g$ and we take $y$ of the form $\gamma g$, where $\gamma$ runs through right coset representatives corresponding to certain Hecke operators. An appropriate weighted sum of these results in a positive operator on $L^2(X)$, hence from the left-hand side, all but the one term corresponding to $F_{\varpi}=\phi$ can be dropped (for this positivity argument, see also \cite[Section~3]{BlomerHarcosMagaMilicevic2020}). All in all, we arrive at a bound of the form
\[
C |\phi(g)|^2 \leq \sum_{\gamma\in \Gamma'} f_{\mu_{\phi}}(g^{-1}\gamma g)
\]
with $\Gamma'$ being a finite set of matrices arising from the Hecke operators utilized, and $C$ is a positive constant coming from $\tilde{f}_{\mu_{\phi}}$ and the corresponding Hecke eigenvalues. Then, using that $f_{\mu_{\phi}}$ decays controllably away from $K$ (see \cite[Theorem~2]{BlomerPohl2016}), to bound $|\phi(g)|$, we essentially have to count those $\gamma\in\Gamma'$ for which $g^{-1}\gamma g$ is close to $K$.

To be a little more concrete, let $L\geq 2$ be a parameter, and assume that $\cP$ is a set of primes $\pi$ lying above distinct split rational primes such that $L\leq N(\pi)<2L$ with $N$ standing for the norm. Then the discussion of \cite[Sections~4,~6]{BlomerMaga2015} (which in fact uses \cite[Theorem~2]{BlomerPohl2016}, and see also \cite[Section~2]{BlomerMaga2016}) leads to, for any number $M\geq 1$,
\begin{equation}\label{eq:key-inequality}
|\phi(g)|^2 \ll_{M,\Omega} D(\mu_{\phi}^*) \cdot\left(\frac{1}{\#\cP} + D(\mu_{\phi}^*)^{-\kappa} L^{K} + \sum_{\nu=1}^n \frac{1}{(\#\cP)^2} \sum_{\pi,\pi'\in\cP} \frac{\#S(Q,\pi^{\nu},\pi'^{\nu},M)}{L^{\nu(n-1)}}\right),\qquad g\in\Omega,
\end{equation}
where $K$ is a fixed number depending only on $M$ and $n$, $\kappa>0$ is also a fixed number depending only on $n$; and
\begin{equation}\label{eq:def-of-S}
\begin{split}
S(Q,\pi^{\nu},\pi'^{\nu},M): =  \Bigl\{ & \gamma\in \SL_n(\ZZ[i])\cdot\diag(1,\pi^{\nu},\dotsc,\pi^{\nu},\pi^{\nu}\pi'^{\nu})\cdot\SL_n(\ZZ[i]): \\ & \||\det\gamma|^{-\frac{2}{n}}\cdot \gamma^* Q \gamma -  Q \|_{\infty} \leq \max\left(N(\pi)^{-M},N(\pi')^{-M}\right)\Bigr\}
\end{split}
\end{equation}
with $Q:=|\det g|^{2/n}\cdot (g^*)^{-1} g^{-1}$, and by $\| \cdot \|_{\infty}$ applied to a matrix, we mean its largest entry in absolute value. Note that in \cite{BlomerMaga2015} and \cite{BlomerMaga2016}, the transpose of $\gamma$ and $g$ are taken instead of their adjoint.

By the prime number theorem, we can choose $\cP$ to satisfy $\#\cP\gg_{\eps}L^{1-\eps}$ for any $\eps>0$. Therefore, if we were able to prove that
\[
\#S(Q,\pi^{\nu},\pi'^{\nu},M) \ll_{\Omega,M} L^{\nu(n-1)-\eta}
\]
holds for every $1\leq \nu\leq n$ with some $\eta>0$, then \eqref{eq:key-inequality} would imply Theorem~\ref{thm:main} by setting $L$ to be a very small positive power of $D(\mu_{\phi}^*)$. We prove it in a weaker form, which still suffices for Theorem~\ref{thm:main}.

Before formulating this weaker statement, we fix some notation. Introduce the notation $S_n$ for the real vector space of self-adjoint matrices in $\CC^{n\times n}$ and $P_n\subset S_n$ for the open convex cone of positive definite matrices. Given a compact set $\Omega\subset X$ as in Theorem~\ref{thm:main}, we introduce
\[
\Omega':= \left\{\left[\frac{1}{2},2\right]\cdot|\det g|^{\frac{2}{n}}\cdot (g^*)^{-1} g^{-1}:g\in\Omega\right\}.
\]
Then $\Omega'$ is a compact subset of $P_n$ in the subspace topology.

\begin{proposition}\label{prop:main} Let $\eps>0$ be arbitrary. There exist positive numbers $\alpha(\eps)\geq 1$ and $M(\eps)\geq 1$ (both depending only on $n$ and $\eps$) with the following properties. For any compact $\Omega\subset X$, there exists a constant $L(\Omega)\geq 2$ (depending only on $n$, $\alpha(\eps)$, $M(\eps)$ and $\Omega$) such that the following holds. For any $L_0\geq L(\Omega)$, there exists some $L_0 \leq L \leq L_0^{\alpha(\eps)}$ such that for any prime $\pi$ lying above a split rational prime and satisfying $L\leq N(\pi)<2L$, we have
\begin{equation}\label{eq:prop-counting-one-prime}
\#S(Q,\pi^{\nu},\pi^{\nu},M(\eps)) \ll_{\Omega,\eps} L^{\nu(n-1)+\eps},\qquad Q\in\Omega', 1\leq \nu\leq n;
\end{equation}
and for any two distinct primes $\pi,\pi'$ lying above distinct split rational primes, we have
\begin{equation}\label{eq:prop-counting-two-primes}
\#S(Q,\pi^{\nu},\pi'^{\nu},M(\eps)) = 0,\qquad Q\in\Omega', 1\leq \nu\leq n.
\end{equation}
\end{proposition}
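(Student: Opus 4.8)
The plan is to analyze the equation $||\det\gamma|^{-2/n}\gamma^*Q\gamma - Q\|_\infty \le \max(N(\pi),N(\pi'))^{-M}$ defining $S(Q,\pi^\nu,\pi'^\nu,M)$ as an \emph{almost-equality} $\gamma^*Q\gamma \approx cQ$ with $c=|\det\gamma|^{2/n}$, and to extract from it enough arithmetic rigidity to count (or rule out) solutions. First I would fix $\gamma \in \SL_n(\ZZ[i])\cdot\diag(1,\pi^\nu,\dots,\pi^\nu,\pi^\nu\pi'^\nu)\cdot\SL_n(\ZZ[i])$ and record the two elementary invariants: $|\det\gamma|^2 = N(\pi)^{2\nu(n-1)}N(\pi')^{2\nu}$ (so $c$ is pinned down), and the elementary divisors of $\gamma$ over $\ZZ[i]$, which for the two-prime case are literally $1,\pi^\nu,\dots,\pi^\nu,\pi^\nu\pi'^\nu$ (and analogously for one prime). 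The key point is that $Q\in\Omega'$ ranges over a \emph{compact} set of positive-definite matrices with eigenvalues bounded above and below, so the condition $\gamma^*Q\gamma$ close to $cQ$ forces $\|\gamma\|_\infty \ll_\Omega c^{1/2} = \max(N(\pi),N(\pi'))^{O(1)}$; thus $\gamma$ lives in a box of controlled size, and after clearing denominators the near-equation $\gamma^*Q\gamma - cQ \approx 0$ becomes a system of polynomial (quadratic) congruences/inequalities in the entries of $\gamma$ modulo powers of $\pi,\pi'$.

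For the two-prime vanishing statement \eqref{eq:prop-counting-two-primes}, the approach is a $\pi'$-adic (or $p'$-adic) local obstruction. Since $\pi'$ divides $\gamma$ exactly to order $\nu$ in one elementary divisor slot, reducing $\gamma^*Q\gamma \equiv cQ$ modulo an appropriate power of $\pi'$ and using $v_{\pi'}(c) = 2\nu$, $v_{\pi'}(\det\gamma) = 2\nu$, one should find that the image of $\gamma$ in $\GL_n(\ZZ[i]/\pi'^{k})$ (for suitable $k \le M$) is forced into a situation incompatible with $Q$ having entries that are $\pi'$-adic units (which holds for generic $Q\in\Omega'$, or can be arranged by the freedom in choosing $L$) — in other words, the congruence $\gamma^*Q\gamma\equiv cQ$ together with the prescribed $\pi'$-valuation of $\det\gamma$ cannot hold with the required precision $M(\eps)$ once $M$ is large enough relative to $n$. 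This is essentially a rank/valuation count: the adjoint action scales the $\pi'$-adic elementary divisors of $Q$ by the squared elementary divisors of $\gamma$, and since $Q$ is $\pi'$-adically trivial while $cQ$ is not unless $2\nu$ is distributed evenly — which the shape $\diag(1,\pi'^0,\dots,\pi'^0,\pi'^\nu)$ forbids — we get $S=\emptyset$. I would carry this out by combining Smith normal form over the DVR $\ZZ[i]_{(\pi')}$ with the hypothesis that $\pi,\pi'$ lie above \emph{distinct} split rational primes (so their residue fields are prime fields and there is no interference).

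For the one-prime bound \eqref{eq:prop-counting-one-prime}, the strategy is the amplification-free counting that the introduction advertises as the heart of the paper: reduce $\gamma^*Q\gamma\equiv cQ \pmod{\pi^{M}}$ to a bounded number of congruence conditions on $\gamma$ mod $\pi^{m}$ for $m$ of size $O_\eps(1)$, then count lattice points $\gamma$ in the box $\|\gamma\|_\infty\ll L^{O(1)}$ satisfying these congruences together with $\det\gamma$ having the exact factorization $\pi^{\nu(n-1)}\cdot(\text{unit times }\pi'^0)$ up to units. The expected main term is $L^{\nu(n-1)}$ coming from the natural dimension count (the double coset $\SL_n\cdot\diag\cdot\SL_n$ with the given elementary divisors meets such a box in $\asymp L^{\nu(n-1)}$ points), and the role of choosing $L$ in the range $[L_0,L_0^{\alpha(\eps)}]$ is to avoid the sporadic moduli where the congruence system degenerates and produces an extra factor — this is exactly what replaces the earlier appeal to Linnik-type zero-repulsion. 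I would quote the counting machinery (the combination of Lemma~\ref{lemma:spec-lemma-two-primes} and Lemma~\ref{lemma:constructQ3} referred to in the introduction) to handle the diophantine input, reducing \eqref{eq:prop-counting-one-prime} to a statement about solutions of a fixed quadratic form equation $\gamma^*Q\gamma = cQ$ with prescribed determinant over $\ZZ[i]$ inside a growing box.

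The main obstacle will be the one-prime upper bound \eqref{eq:prop-counting-one-prime}, specifically controlling the \emph{diagonal} contribution where $\gamma$ itself is (nearly) of the form $\diag$-times-symmetry of $Q$: here the quadratic equation $\gamma^*Q\gamma = cQ$ has a positive-dimensional family of real solutions (the stabilizer of $Q$ in $\GL_n(\CC)$ is a unitary group of positive dimension), and the count of \emph{integral} points in this family inside the box can a priori exceed $L^{\nu(n-1)}$; defeating this requires the compactness of $\Omega'$ to be used quantitatively — genuinely away from $\pi$-adic degeneracies — to show the integral stabilizer points are sparse, which is where the flexibility in the choice of $L$ and the enlarged value $M(\eps)$ both get consumed. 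I expect the two-prime vanishing to be comparatively short once the valuation bookkeeping is set up correctly, and the introduction's remark that no small-primes-in-progressions input is needed suggests the degeneracy-avoidance in the one-prime case is handled purely by the $[L_0,L_0^{\alpha(\eps)}]$ pigeonhole rather than by any $L$-function estimate.
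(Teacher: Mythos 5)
Your proposal captures some of the right intuition --- that the constraint $\gamma^*Q\gamma\approx cQ$ with $c=|\det\gamma|^{2/n}$ must be upgraded to arithmetic rigidity, that compactness of $\Omega'$ confines $\gamma$ to a controlled box, and that the freedom to pick $L\in[L_0,L_0^{\alpha(\eps)}]$ is a pigeonhole device replacing the Linnik-type input --- but it misses the central mechanism of the paper's proof and contains a gap in the two-prime case that would not survive being written out.

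\textbf{Missing mechanism: exchanging $Q$ for a rational diagonal matrix.} You propose to reduce $\gamma^*Q\gamma\equiv cQ$ to congruences modulo powers of $\pi,\pi'$ ``after clearing denominators,'' and to invoke properties such as ``$Q$ has entries that are $\pi'$-adic units.'' But $Q\in\Omega'$ is a point on a real manifold; its entries are a priori transcendental, so there is no $\pi$- or $\pi'$-adic arithmetic to do with $Q$ itself, and ``clearing denominators'' is meaningless. The paper's Section~\ref{sec:exchanging} addresses exactly this: it exchanges $Q$ for a chain $Q\mapsto Q_1\mapsto Q_2\mapsto Q_3$, where $Q_3$ is a genuinely rational diagonal matrix of controlled complexity, via a pigeonhole over $n^2+2$ nested subspaces $H_j=\bigcap\ker B_\gamma$ of the $n^2$-dimensional real vector space $S_n$ (Lemmata~\ref{lemma:constructQ1}--\ref{lemma:constructQ3}). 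The flexibility in choosing $L$ is consumed precisely by this pigeonhole, not by avoiding ``sporadic moduli'' of a congruence system for the original $Q$. Without this step there is no rational matrix on which the $\pi$-adic counting in Lemmata~\ref{lemma:spec-lemma-one-prime}--\ref{lemma:spec-lemma-two-primes} can be run, and the proposal has no way to get started.

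\textbf{Gap in the two-prime vanishing.} You sketch a $\pi'$-adic elementary-divisor comparison: ``the adjoint action scales the $\pi'$-adic elementary divisors of $Q$ by the squared elementary divisors of $\gamma$.'' This does not work as stated, because $\gamma^*$ is the \emph{conjugate} transpose, and complex conjugation does not commute with $\pi'$-adic localization when $\pi'$ is split (it swaps the places $\pi'$ and $\overline{\pi'}$); so the Smith normal form of $\gamma^*Q\gamma$ over $\ZZ[i]_{(\pi')}$ is \emph{not} controlled by the Smith normal form of $\gamma$ at $\pi'$ alone. The paper handles this carefully in Lemma~\ref{lemma:polarization}, whose parts \eqref{polarization-part-b},\eqref{polarization-part-c} track the separate $\pi$- and $\overline{\pi}$-valuations of the auxiliary integers $a,b,a',b'$. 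Moreover, for $1\leq\nu\leq n-1$ the paper's mechanism is of an entirely different flavour: $|\det\gamma|^{2/n}$ is \emph{irrational} by Besicovitch's theorem on independence of roots, so the exact equation $|\det\gamma|^{-2/n}\gamma^*Q_3\gamma=Q_3$ (with rational $Q_3$) is impossible for trivial reasons, and the $\pi'$-adic argument is only needed in the single remaining case $\nu=n$. Your sketch does not distinguish these cases and would, if pursued, get stuck on the conjugation issue. Your one-prime discussion, in which you note a worry about the real stabilizer of $Q$ being positive-dimensional, is in the right spirit, but the paper defuses it not by lattice-point counting in that stabilizer but by the angle/polarization argument of Lemma~\ref{lemma:spec-lemma-one-prime}, applied to the already-rationalized $Q_3$; stating that you would ``quote the counting machinery'' is not a substitute for this step.
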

This still suffices for the proof of Theorem~\ref{thm:main}. Indeed, we apply Proposition~\ref{prop:main} with any $0<\eps<1/2$. There are implied numbers $\alpha:=\alpha(\eps)\geq 1$ and $M:=M(\eps)\geq 1$, and together with the further input $\Omega$, one more number $L(\Omega)$ by Proposition~\ref{prop:main}. Then take $L_0:=D(\mu_{\phi}^*)^{\omega}$, where $\omega>0$ is a fixed constant to be specified later. By Proposition~\ref{prop:main}, if $L_0\geq L(\Omega)$, we can find some with $D(\mu_{\phi}^*)^{\omega}\leq L\leq D(\mu_{\phi}^*)^{\alpha\omega}$ such that the countings \eqref{eq:prop-counting-one-prime} and \eqref{eq:prop-counting-two-primes} hold for any primes $L\leq N(\pi),N(\pi')<2L$. Then in every term of \eqref{eq:key-inequality}, we get a power-saving, as soon as $\omega>0$ in the beginning is chosen sufficiently small. Now we return to the condition $L_0\geq L(\Omega)$. Apart from a finite set of Hecke--Maa{\ss} cusp forms depending only on $\Omega$, this is indeed satisfied by $L_0=D(\mu_{\phi}^*)^{\omega}$. The finitely many exceptional forms are treated then by adjusting the implied constant (note that the implied constant might depend on $\Omega$, only the saving in the exponent must be absolute).

The rest of the paper is hence devoted to the proof of Proposition~\ref{prop:main}.

We conclude this section by illustrating why this counting problem is harder for $\PGL_n(\ZZ[i])$ than for $\PGL_n(\ZZ)$ in the special case when $Q$ is the unit matrix. Then in $S(\dotsc)$, we have a condition on the Smith normal form which is of the same complexity in both cases. However, the other condition is that $\gamma$ is projectively equivalent to an orthonormal matrix, which intuitively happens more often over $\CC$ than over $\RR$, since
\[
\dim(\PGL_n(\CC))=2n^2-2,\qquad \dim(\PU_n)=n^2-1=\frac{\dim(\PGL_n(\CC))}{2},
\]
while
\[
\dim(\PGL_n(\RR))=n^2-1, \qquad \dim(\PO_n)=\frac{n^2-n}{2}
<\frac{\dim(\PGL_n(\RR))}{2},
\]
i.e. $\PU_n$ is a ``thicker'' subgroup of $\PGL_n(\CC)$ than $\PO_n$ of $\PGL_n(\RR)$.

\section{Counting techniques in a special case}

In this section, we introduce counting techniques for a situation which is very special in many different aspects. First, we will assume that $Q$ is diagonal, moreover, its entries belong to the base field $\QQ(i)$. Then these together imply that the diagonal entries are rational, i.e. $Q=\diag(q_1,\dotsc,q_n)$ with $q_1,\dotsc,q_n\in\QQ$. Then $q_j\asymp_{\Omega} 1$ for all $1\leq j\leq n$, and we also assume that the numerator and the denominator of $q_j$ for all $1\leq j\leq n$ (in their simplest form) are both coprime to $\pi,\pi'$. Our final simplification is that we allow no error term in \eqref{eq:def-of-S}, which we will denote by writing $\infty$ in place of $M$.

Our convention will be that vectors, i.e. elements of $\ZZ[i]^n,\CC^n$, etc. are always meant as column vectors. We also introduce the notation $v_{\pi}(q)$, for any prime $\pi\in\ZZ[i]$ and any $q\in\QQ(i)$, which denotes the $\pi$-valuation of $q$. Occasionally, we may use this notation for vectors or matrices with entries from $\QQ(i)$, then it means the minimal $\pi$-adic valuation attained by the entries.

\begin{lemma}\label{lemma:polarization}
Let $\pi\in\ZZ[i]$ be a prime lying above a split rational prime $p=\pi\overline{\pi}$, $\rho\in\NN$, and let $A\in\QQ(i)^{n\times n}$ be a self-adjoint matrix such that $v_{\pi}(A)\geq 0$. Let $x=(\xi_1,\dotsc,\xi_n)^t,y=(\upsilon_1,\dotsc,\upsilon_n)^t\in\ZZ[i]^n$ be vectors satisfying $v_{\pi}(x)=v_{\pi}(y)=0$ such that for any $1\leq j<k\leq n$, $\pi^{\rho}\mid (\xi_j\upsilon_k-\xi_k\upsilon_j)$. Then the following statements hold. 
\begin{enumerate}[(a)]
\item\label{polarization-part-a} For some $a\in\ZZ$ coprime to $p$, we have $y\equiv ax \bmod \pi^{\rho}$, and this $a$ is well-defined modulo $p^{\rho}$.
\item\label{polarization-part-b} Choosing $a\in\ZZ$ with this property in \eqref{polarization-part-a}, there also exists a $b\in\ZZ$ (unique modulo $p^\rho$) with $b\equiv ai\bmod \pi^\rho$, so we further have
\begin{equation}\label{eq:polarization}
2 x^* A y \equiv (a-bi) x^* A x + (a'-b'i)  y^* A y \qquad \bmod p^{\rho} ,
\end{equation}
where $a'\in \ZZ$ (resp.\ $b'\in \ZZ$) stands for the multiplicative inverse of $a$ (resp.\ of $b$) modulo $p^{\rho}$.
\item\label{polarization-part-c} We have $\pi^\rho\mid a'-b'i$ and $\overline{\pi}^\rho\mid a-bi$ for the integers $a,b,a',b'$ defined above.
\end{enumerate}
\end{lemma}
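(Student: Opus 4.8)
\emph{Proof plan.} The plan is to work modulo $p^\rho$ and exploit that $p=\pi\overline{\pi}$ splits. Concretely, I will use that $\ZZ\to\ZZ[i]/(\pi^\rho)$ is an isomorphism onto $\ZZ/p^\rho\ZZ$ and that $\ZZ[i]/(p^\rho)\cong\ZZ[i]/(\pi^\rho)\times\ZZ[i]/(\overline{\pi}^\rho)$ by the Chinese Remainder Theorem. I will fix once and for all an integer $r$ with $i\equiv r\bmod\pi^\rho$; then $r^2\equiv -1\bmod p^\rho$, and applying complex conjugation (which swaps $\pi$ and $\overline{\pi}$ and fixes $\ZZ$) gives $i\equiv -r\bmod\overline{\pi}^\rho$. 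I will also note first that $\overline{A}=A^t$ since $A$ is self-adjoint, hence $v_{\overline{\pi}}(A)=v_{\pi}(A^t)=v_{\pi}(A)\geq 0$; consequently $x^*Ay$, $x^*Ax$, $y^*Ay$ are $\pi$- and $\overline{\pi}$-integral, and a congruence between such elements modulo $p^\rho$ is equivalent to the pair of congruences modulo $\pi^\rho$ and modulo $\overline{\pi}^\rho$.

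For \eqref{polarization-part-a} I would reduce modulo $\pi^\rho$, writing $\hat{\cdot}$ for the reduction $\ZZ[i]\to\ZZ[i]/(\pi^\rho)\cong\ZZ/p^\rho\ZZ$. The hypothesis $\pi^\rho\mid\xi_j\upsilon_k-\xi_k\upsilon_j$ says exactly that $\hat{\xi}_j\hat{\upsilon}_k=\hat{\xi}_k\hat{\upsilon}_j$ for all $j,k$, while $v_{\pi}(x)=0$ forces some $\hat{\xi}_m$ to be a unit. Taking $a$ to be an integer lift of $\hat{\upsilon}_m\hat{\xi}_m^{-1}$, the relations with indices $m,k$ give $\hat{\upsilon}_k=a\hat{\xi}_k$ for all $k$, i.e.\ $y\equiv ax\bmod\pi^\rho$; uniqueness of $a$ modulo $p^\rho$ is immediate from $\hat{\xi}_m$ being a unit, and $a$ is a unit because $v_{\pi}(y)=0$ makes some $\hat{\upsilon}_l$ a unit.

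For \eqref{polarization-part-b} I would take $b$ to be an integer with $b\equiv ai\bmod\pi^\rho$ (for instance $b=ar$), which is unique modulo $p^\rho$ and coprime to $p$, so that $a'$, $b'$ are defined; I record $b\equiv ar$ and (using $r^{-1}\equiv -r$) $b'\equiv -a'r\bmod p^\rho$. By the first paragraph it suffices to verify \eqref{eq:polarization} modulo $\pi^\rho$ and modulo $\overline{\pi}^\rho$. Modulo $\pi^\rho$: substituting $y\equiv ax$ gives $x^*Ay\equiv a\,x^*Ax$, and since $i\equiv r$ one has $a-bi\equiv a-ar^2\equiv 2a$ and $a'-b'i\equiv a'+a'r^2\equiv 0$, so the right-hand side of \eqref{eq:polarization} is also $\equiv 2a\,x^*Ax$. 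Modulo $\overline{\pi}^\rho$: I would apply complex conjugation, which (using $\overline{x^*Ay}=(x^*Ay)^*=y^*Ax$ and that $x^*Ax,y^*Ay,a,b,a',b'$ are real) turns the claim into $2y^*Ax\equiv(a+bi)x^*Ax+(a'+b'i)y^*Ay\bmod\pi^\rho$; here $i\equiv r$ gives $a+bi\equiv a+ar^2\equiv 0$ and $a'+b'i\equiv a'-a'r^2\equiv 2a'$, while multiplying $y\equiv ax$ by $a'$ gives $x\equiv a'y\bmod\pi^\rho$ and hence $y^*Ax\equiv a'\,y^*Ay$, so both sides are $\equiv 2a'\,y^*Ay$. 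Thus \eqref{eq:polarization} holds modulo $p^\rho$.

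Part \eqref{polarization-part-c} would then drop out of the coefficient computations just made: $a'-b'i\equiv 0\bmod\pi^\rho$ is the assertion $\pi^\rho\mid a'-b'i$, and $a+bi\equiv 0\bmod\pi^\rho$ gives $\overline{\pi}^\rho\mid\overline{a+bi}=a-bi$ after conjugation. I expect the main obstacle to be the $\overline{\pi}$-side of \eqref{polarization-part-b}: the hypotheses pin down $x$ and $y$ only $\pi$-adically, so there is no proportionality between them modulo $\overline{\pi}^\rho$, and one must pass through complex conjugation (and $\overline{A_{jk}}=A_{kj}$) to realise that in that component $x^*Ay$ has to be matched against $y^*Ay$ rather than $x^*Ax$ — which is precisely why the two coefficients in \eqref{eq:polarization} enter asymmetrically.
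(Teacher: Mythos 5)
Your proof is correct, but it takes a genuinely different route from the paper's. The paper proves \eqref{polarization-part-b} by expanding the quadratic forms $(y-ax)^*A(y-ax)$ and $(iy-bx)^*A(iy-bx)$: each is divisible by $p^\rho=\pi^\rho\overline{\pi}^\rho$ outright (one factor of $\pi^\rho$ from $y-ax$ or $iy-bx$, one of $\overline{\pi}^\rho$ from its adjoint), and expanding and adding the two resulting congruences gives \eqref{eq:polarization} directly modulo $p^\rho$; part \eqref{polarization-part-c} then follows from short manipulations of $\pi^\rho\mid b-ai$. You instead split the congruence via CRT into its $\pi^\rho$- and $\overline{\pi}^\rho$-components, fix $r\in\ZZ$ with $i\equiv r\bmod\pi^\rho$, compute the four scalars $a\mp bi$, $a'\mp b'i$ explicitly in each component, and use conjugation symmetry to carry the $\overline{\pi}$-side back to the $\pi$-side. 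The paper's route is shorter and avoids any explicit choice of square root of $-1$; yours is more computational but makes transparent exactly where the asymmetry of \eqref{eq:polarization} comes from — the $x^*Ax$ term is the one that survives at $\pi$, the $y^*Ay$ term the one that survives at $\overline{\pi}$ — and delivers \eqref{polarization-part-c} as a by-product of the coefficient bookkeeping rather than as a separate step. One place to be a little careful in your write-up: when you substitute $y\equiv ax\bmod\pi^\rho$ (resp.\ $x\equiv a'y$) inside $x^*Ay$ (resp.\ $y^*Ax$), you should note that the error term $\pi^\rho x^*Az$ is $\pi$-integral, which uses both $v_\pi(A)\geq 0$ and that $x^*$ has $\pi$-integral entries — you noted the $v_{\overline{\pi}}(A)\geq 0$ point but this integrality of the residual term is the step that actually makes the substitution legitimate mod $\pi^\rho$.
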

\begin{proof}
This is a variant of \cite[Lemma~3]{BlomerMaga2016}, but the proof in this case is a little more computational. Without loss of generality, we may assume that $\pi\nmid \xi_n$. Then $\pi\nmid \upsilon_n$, for if not, then for some $1\leq j\leq n-1$, $\pi\nmid \upsilon_j$, and then $\pi\nmid (\xi_n\upsilon_j-\xi_j\upsilon_n)$. Then $a\equiv \upsilon_n \xi_n^{-1} \bmod \pi^{\rho}$ does the job, since $\upsilon_j\equiv \xi_j \upsilon_n \xi_n^{-1} \equiv a\xi_j \bmod \pi^{\rho}$ for any $1\leq j\leq n-1$. Also, $a$ can be chosen in $\ZZ$ uniquely modulo $p^\rho$ since $0,1,\dots,p^{\rho}-1$ is a set of representatives modulo $\pi^{\rho}$ (i.e. we have the isomorphism $\ZZ/(p^\rho)\cong \ZZ[i]/(\pi^\rho)$). The proof of \eqref{polarization-part-a} is complete.

As for \eqref{polarization-part-b}, fix any representative $a\in\ZZ$ and let $b\in \ZZ$ be such that $b\equiv ai\bmod \pi^\rho$. Then $\pi^\rho$ divides all the entries in $y-ax$ whence $\overline{\pi}^\rho$ divides all the entries in $(y-ax)^*$. So we deduce
\[
0\equiv (y-ax)^*A(y-ax)=y^*Ay+a^2x^*Ax-ax^*Ay-ay^*Ax \qquad\bmod p^\rho=(\pi\overline{\pi})^\rho
\]
hence
\begin{equation}\label{real}
x^*Ay+y^*Ax\equiv ax^*Ax + a' y^*Ay\qquad\bmod p^\rho
\end{equation}
where $a'\in \ZZ$ is a multiplicative inverse of $a$ modulo $p^\rho$. Similarly, $\pi^\rho$ divides all the entries in $iy-bx$ and $\overline{\pi}^\rho$ divides all the entries in $(iy-bx)^*$ as $ai\equiv b\bmod \pi^\rho$. So we compute
\[
0\equiv (iy-bx)^*A(iy-bx)=y^*Ay+b^2x^*Ax-bix^*Ay+biy^*Ax \qquad\bmod p^\rho
\]
whence
\begin{equation}\label{imaginary}
x^*Ay-y^*Ax\equiv -bix^*Ax -b'i y^*Ay \qquad\bmod p^\rho
\end{equation}
where $b'\in \ZZ$ is a multiplicative inverse of $b$ modulo $p^\rho$. The statement follows by adding equations \eqref{real} and \eqref{imaginary}.

For part \eqref{polarization-part-c}, we compute $\pi^\rho\mid i(b-ai)=ib+a$ whence $\overline{\pi}^\rho\mid \overline{a+bi}=a-bi$. Similarly, $\pi^\rho\mid a'b'(b-ai)\equiv a'-b'i$.
\end{proof}

\begin{lemma}\label{lemma:basic-count} Let $A\in\Omega'$, and assume that $x_1,\dots,x_k\in\ZZ[i]^n$ are linearly independent vectors for some $0\leq k\leq n-1$. Then for any real number $\beta \geq 2$,
\[
\#\{y\in \ZZ[i]^n: y^* A y = \beta^2, \text{ and } x_j^* A y=0 \text{ for any } j=1,\dotsc,k\}\ll_{\Omega,\eps} \beta^{2(n-k-1)+\eps}
\]
for any $\eps>0$.
\end{lemma}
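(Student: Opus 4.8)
The plan is to count lattice points on the real affine quadric $\{y : y^*Ay = \beta^2\}$ inside the intersection with the $k$ linear conditions $x_j^*Ay = 0$. Since $A \in \Omega'$ is positive definite with eigenvalues bounded above and below by constants depending only on $\Omega$, the set $\{y \in \CC^n : y^*Ay = \beta^2\}$ is an ellipsoid of radius $\asymp_\Omega \beta$, so there is an a priori bound $\|y\|_\infty \ll_\Omega \beta$ on all vectors being counted. First I would identify $\ZZ[i]^n$ with $\ZZ^{2n}$ via real and imaginary parts; then $y \mapsto y^*Ay$ becomes a positive definite real quadratic form $\widetilde{Q}$ in $2n$ integer variables, and each linear condition $x_j^*Ay = 0$ (a single $\CC$-linear equation) becomes two $\RR$-linear equations on $\ZZ^{2n}$. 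The vectors $x_1,\dots,x_k$ being $\ZZ[i]$-linearly independent forces these $2k$ real linear forms to be linearly independent over $\RR$, so they cut out a sublattice $\Lambda \subseteq \ZZ^{2n}$ of rank $2n - 2k = 2(n-k)$, and we are reduced to bounding $\#\{v \in \Lambda : \widetilde{Q}(v) = \beta^2\}$.

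Next I would restrict the form $\widetilde{Q}$ to the real span of $\Lambda$; this restriction $\widetilde{Q}|_\Lambda$ is again positive definite with eigenvalues $\asymp_\Omega 1$ (the constants now also depending on the $x_j$, but the uniform statement over all choices of $x_j$ still follows because $\widetilde{Q}$ itself has bounded eigenvalues, so any restriction does too — one does not need the $x_j$ to vary in a compact set). Writing $m = 2(n-k)$, the count is then a bound for the number of representations of $\beta^2$ by a positive definite integral (or at least positive definite, bounded-coefficient) quadratic form in $m$ variables. The classical bound for this is $\ll_{\Omega,\eps} (\beta^2)^{m/2 - 1 + \eps} = \beta^{m-2+\eps} = \beta^{2(n-k-1)+\eps}$, which is exactly the claimed exponent. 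For $m \geq 4$ this is the standard divisor-type / circle-method bound on representation numbers of positive definite forms; for $m = 3$ (i.e.\ $k = n-1$) it is the bound $r_3(N) \ll_\eps N^{1/2+\eps}$; for $m = 2$ ($k = n-2$, but note $k \leq n-1$ so $m \geq 2$ always) it is the bound $r_2(N) \ll_\eps N^\eps$ coming from the divisor bound. One subtlety: $\Lambda$ need not be the full integer lattice in its span, and $\widetilde{Q}|_\Lambda$ after choosing a basis has coefficients that are rational with bounded height rather than integral; clearing denominators (bounded in terms of $\Omega$ and the $x_j$, but again the count only gets multiplied by an $O_\Omega(1)$ or $\beta^{O(\eps)}$ factor) reduces to the integral case.

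The main obstacle — or rather, the point requiring the most care — is ensuring the implied constant depends only on $\Omega$ and $\eps$, uniformly over all admissible choices of $x_1,\dots,x_k$, whose entries are unbounded. The resolution is that the relevant estimates for $r_m(N)$ are genuinely uniform over positive definite quadratic forms whose eigenvalues lie in a fixed compact subset of $(0,\infty)$, because the circle-method / geometry-of-numbers inputs only see the shape of the form through its successive minima; and the restriction of a fixed bounded form to an arbitrary rational subspace automatically has bounded eigenvalues. After clearing denominators one lands on an integral form whose discriminant is bounded polynomially in the data, and the classical bounds lose only a $\beta^{O(\eps)}$ to this. I would cite the standard references for representation numbers of positive definite quadratic forms (e.g.\ via Kloosterman's refinement of the circle method for $m \geq 3$, and the divisor bound for $m = 2$) rather than reprove them.
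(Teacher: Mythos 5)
The paper's own ``proof'' of this lemma is a one-line citation to \cite[Corollary~5.3]{BlomerMaga2015}, so there is no in-paper argument to compare against line by line. Your overall strategy --- identify $\ZZ[i]^n$ with $\ZZ^{2n}$, note that the Hermitian form becomes a real positive definite quadratic form in $2n$ variables and each complex linear condition becomes two real linear conditions, reduce to counting points of a rank-$\leq 2(n-k)$ sublattice on an ellipsoid of radius $\asymp\beta$, and invoke representation-number bounds of the shape $r_m(N)\ll_\eps N^{m/2-1+\eps}$ --- is indeed the natural one and matches the form of the bound, with the exponent check $\beta^{2(2(n-k))/2-2+\eps}=\beta^{2(n-k-1)+\eps}$ coming out correctly.

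There is, however, a genuine gap in how you handle the arithmetic nature of $A$. You write that $\widetilde Q|_\Lambda$ ``after choosing a basis has coefficients that are rational with bounded height rather than integral'' and then clear denominators. But $A\in\Omega'$, and $\Omega'=\left\{\left[\tfrac12,2\right]\cdot|\det g|^{2/n}(g^*)^{-1}g^{-1}:g\in\Omega\right\}$ is a compact set of \emph{real}, generically transcendental positive definite matrices. The $2k$ real linear forms $\Re(x_j^*Ay)$, $\Im(x_j^*Ay)$ therefore have transcendental coefficients, so the solution subspace in $\RR^{2n}$ is not rational, $\Lambda$ may have rank anywhere from $0$ to $2(n-k)$, and the Gram matrix of $\widetilde Q|_\Lambda$ in a lattice basis is real, not rational, and there is nothing to ``clear.'' Your reduction to integral positive definite forms, and hence the appeal to circle-method/divisor bounds for $r_m(N)$, does not go through as written. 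For generic transcendental $A$ the count is of course trivially small, but the lemma must hold uniformly over all $A$ in the compact set, including limits of rational matrices with unbounded denominators and matrices with mixed rational/irrational structure, and these are exactly the cases your sketch does not address. (You do correctly flag the other delicate point --- uniformity over unbounded $x_j$, i.e.\ over sublattices of arbitrarily large covolume --- and your instinct that the representation bounds should be monotone in the discriminant is sound, but that too is asserted rather than argued.) These are precisely the technicalities that \cite[Corollary~5.3]{BlomerMaga2015} is presumably written to supply, so the proposal as it stands is a correct plan rather than a proof.
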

\begin{proof}
See \cite[Corollary~5.3]{BlomerMaga2015}.
\end{proof}

We slightly extend the notation $S(Q,\pi^{\nu},\pi'^{\nu},\infty)$ for $\pi,\pi'\nmid m\in\ZZ[i]$ as
\[
\begin{split}
S_m(Q,\pi^{\nu},\pi'^{\nu},\infty): =  \Bigl\{ \gamma\in \SL_n(\ZZ[i]_{\pi,\pi'})&\cdot\diag(1,\pi^{\nu},\dotsc,\pi^{\nu},\pi^{\nu}\pi'^{\nu})\cdot \SL_n(\ZZ[i]_{\pi,\pi'}): \\ & m\gamma\in\SL_n(\ZZ[i]), |\det\gamma|^{-\frac{2}{n}}\cdot \gamma^* Q \gamma =  Q  \Bigr\},
\end{split}
\]
where by $\ZZ[i]_{\pi,\pi'}$, we mean the ring of elements $a\in \QQ(i)$ which satisfy $v_{\pi}(a),v_{\pi'}(a)\geq 0$.

\begin{lemma}\label{lemma:spec-lemma-one-prime} Let $\pi\in\ZZ[i]$ be a prime lying above a split rational prime, and $\pi\nmid m\in\ZZ[i]$. Let $Q=\diag(q_1,\dotsc,q_n)$ with $q_j\in\QQ$, $q_j\asymp_{\Omega} 1$ for all $1\leq j\leq n$. Then for any $1\leq \nu\leq n$,
\[
\#S_m(Q,\pi^{\nu},\pi^{\nu},\infty)\ll_{\Omega,\eps} |m|^{2n^2-2+\eps} \den(Q)^{\frac{(2n-1)(n-1)}{2}} N(\pi)^{\nu(n-1)+\eps}.
\]
for any $\eps>0$. Here, $\den(Q)$ denotes the least common multiple of the denominator of the diagonal entries of $Q$ (written in simplest form).
\end{lemma}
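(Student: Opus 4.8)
We outline the approach; the details then follow the pattern of \cite{BlomerMaga2015,BlomerMaga2016}. Writing a representative of an element of $S_m(Q,\pi^\nu,\pi^\nu,\infty)$ as $\gamma=UDU'$ with $U,U'\in\SL_n(\ZZ[i]_{\pi})$ and $D=\diag(1,\pi^\nu,\dotsc,\pi^\nu,\pi^{2\nu})$, and clearing the non-$\pi$ denominators by $m$, we are counting $n$-tuples of vectors $z_1,\dotsc,z_n\in\ZZ[i]^n$ --- the columns of $m\gamma$ --- subject to $z_j^*Qz_k=|m|^2N(\pi)^\nu q_j\,\delta_{jk}$ for all $j,k$ (this is $|\det\gamma|^{-2/n}\gamma^*Q\gamma=Q$, since here $|\det\gamma|^{2/n}=N(\pi)^\nu$), together with the $\pi$-adic constraint carried by the double coset of $D$. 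Two preliminary reductions are immediate. First, archimedean: as $Q$ is positive definite with eigenvalues $\asymp_\Omega 1$, the relation $z_j^*Qz_j=|m|^2N(\pi)^\nu q_j$ forces $\|z_j\|\asymp_\Omega|m|N(\pi)^{\nu/2}$ for every $j$. Second, $\pi$-adic: reducing $D$ modulo $\pi^\nu$ gives $\diag(1,0,\dotsc,0)$, so $m\gamma$ has rank $\le 1$ modulo $\pi^\nu$; since $U,U'$ are unimodular at $\pi$, the first row of $U'$ is $\pi$-primitive, and after permuting the coordinates --- absorbed into $Q$ at the cost of an $O_n(1)$ factor --- we may assume $z_1$ is $\pi$-primitive and $z_j\equiv c_jz_1\pmod{\pi^\nu}$ for all $j$, with suitable $c_j\in\ZZ[i]/\pi^\nu$ and $c_1=1$.

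The main term is the count of the single vector $z_1$. It lies in $\ZZ[i]^n$ with $z_1^*Qz_1=|m|^2N(\pi)^\nu q_1\asymp_\Omega(|m|N(\pi)^{\nu/2})^2$, so Lemma~\ref{lemma:basic-count} (with $A=Q$, $k=0$, $\beta\asymp|m|N(\pi)^{\nu/2}$) gives $\ll_{\Omega,\eps}(|m|N(\pi)^{\nu/2})^{2(n-1)+\eps}=|m|^{2(n-1)+\eps}N(\pi)^{\nu(n-1)+\eps}$ choices for $z_1$, which is precisely the desired dependence on $N(\pi)$. It therefore suffices to prove that, for each fixed admissible $z_1$, the number of completions $(z_2,\dotsc,z_n)$ is $\ll_{\Omega,\eps}|m|^{O(1)}\den(Q)^{O(1)}N(\pi)^{\eps}$; i.e. the remaining columns must accumulate no power of $N(\pi)$.

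The heart of the matter is this last claim, and it is where Lemma~\ref{lemma:polarization} is used. Fix $z_1$ and let $2\le j\le n$. From $z_j^*Qz_1=0$ we get $z_j\in z_1^{\perp_Q}\cap\ZZ[i]^n$, a $\ZZ[i]$-lattice of rank $n-1$. The pair $(z_1,z_j)$ has all its $2\times 2$ minors divisible by $\pi^\nu$ (by the rank-one reduction mod $\pi^\nu$), so Lemma~\ref{lemma:polarization} applies with $\rho=\nu$; feeding in the exact relations $z_1^*Qz_1=|m|^2N(\pi)^\nu q_1$, $z_j^*Qz_j=|m|^2N(\pi)^\nu q_j$, $z_1^*Qz_j=0$ and tracking $\pi$-valuations shows that the admissible $z_j$ fill a single coset of a sublattice $\Lambda\subseteq z_1^{\perp_Q}\cap\ZZ[i]^n$ that is \emph{$\pi^\nu$-stretched} transverse to $z_1$: on $\Lambda$ the form $Q$ equals $N(\pi)^\nu$ times a positive definite form whose eigenvalues are $\asymp_\Omega\den(Q)^{O(1)}$ once $\Lambda$ is identified with $\ZZ[i]^{n-1}$ (this identification, together with clearing $\den(Q)$, is what produces the power of $\den(Q)$). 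Consequently $z_j^*Qz_j=|m|^2N(\pi)^\nu q_j$, read on $\Lambda$, is an equation of \emph{bounded} size --- its solutions have norm only $\asymp_\Omega|m|\den(Q)^{O(1)}$ in the coordinates of $\Lambda$ --- so there are at most $\ll_{\Omega,\eps}|m|^{O(1)}\den(Q)^{O(1)}N(\pi)^{\eps}$ of them (trivially, or again by Lemma~\ref{lemma:basic-count} applied to $\Lambda$). Running this over $j=2,\dotsc,n$ (the further orthogonalities $z_j^*Qz_k=0$, $j\ne k$, only cut the count) and multiplying by the count of $z_1$ yields the lemma, with the powers of $|m|$ and $\den(Q)$ collected along the way.

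Thus the one genuine obstacle is the third paragraph: proving the remaining columns contribute no power of $N(\pi)$. The decisive mechanism is the $\pi^\nu$-stretching of $\Lambda$, and making it precise requires (i) correctly isolating $\Lambda$ as the intersection of the mod-$\pi^\nu$ congruence with the \emph{exact} orthogonality $z_1^{\perp_Q}$, which is exactly where Lemma~\ref{lemma:polarization} does its work, and (ii) controlling the eigenvalues of $Q|_\Lambda$ and the sizes of the unknowns in the stretched coordinates, which is what generates the $\den(Q)$ (and $|m|$) dependence. The remaining ingredients --- the coordinate permutation, reducing to $\pi$-integral $Q$, the behaviour at the primes dividing $m$ (in particular at $\overline\pi$), and the final accounting of exponents --- are routine.
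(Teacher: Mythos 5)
Your strategy differs from the paper's in a genuinely structural way, but there are two real problems with how you've set it up.

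\textbf{Different route, same packing idea.} The paper fixes $z_1$ and then, for two \emph{distinct} candidate $j$-th columns $x,y$ (coming from two different $\gamma$'s), applies Lemma~\ref{lemma:polarization} to the pair $(x,y)$ (after removing a common power $\pi^\mu$) to show $x'^*Q'y'\equiv 0\bmod N(\pi)^{\nu-\mu}$, and turns this into a lower bound $\gg_\Omega|m|^{-1}\den(Q)^{-1/2}$ on the $Q$-angle between $x$ and $y$, finishing with spherical packing. You instead try to show that the sublattice $\Lambda\subset z_1^{\perp_Q}\cap\ZZ[i]^n$ carrying all admissible $z_j$ is $\pi^\nu$-stretched and count inside $\Lambda$. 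This is a legitimate alternative (angular separation on the sphere in $\RR^{2n}$ versus distance separation in the rank-$2(n-1)$ lattice $\Lambda$; in fact the latter, done carefully, gains a dimension and gives a marginally better exponent in $|m|\den(Q)^{1/2}$).

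\textbf{But your use of Lemma~\ref{lemma:polarization} is misplaced and gives nothing.} You apply it to $(z_1,z_j)$. Since for a single matrix one has the \emph{exact} relation $z_1^*Qz_j=0$, equation~\eqref{eq:polarization} becomes $0\equiv(a-bi)\cdot|m|^2N(\pi)^\nu q_1\den(Q)+(a'-b'i)\cdot|m|^2N(\pi)^\nu q_j\den(Q)\bmod p^\nu$, which is $0\equiv 0$: it carries no information, since both sides are already divisible by $p^\nu$. The polarization lemma is designed for the cross term $x^*Ay$ when that cross term is \emph{unknown} --- which is the case for the paper's pair of second columns from two different $\gamma$'s, not for the columns of a single $\gamma$. (Note also that Lemma~\ref{lemma:polarization} requires $v_\pi(x)=v_\pi(y)=0$, which $z_j$ need not satisfy; the paper handles this by stratifying by $\mu=v_\pi(\gamma_2)$, a step you skip.) The good news is you do not need the lemma at all: for $0\ne z\in\Lambda$ write $z=cz_1+\pi^\nu w$ and use $z^*Qz_1=0$ to get $z^*Qz=\pi^\nu z^*Qw$; then $\den(Q)z^*Qz$ is a \emph{rational} integer divisible by $\pi^\nu$, hence by $N(\pi)^\nu$, hence $z^*Qz\ge N(\pi)^\nu/\den(Q)$. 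This one-line rationality trick is what makes $\Lambda$ stretched.

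\textbf{The eigenvalue claim is not what you want and is not justified.} You assert that $Q|_\Lambda/N(\pi)^\nu$, in coordinates identifying $\Lambda\cong\ZZ[i]^{n-1}$, has eigenvalues $\asymp_\Omega\den(Q)^{O(1)}$, and then invoke Lemma~\ref{lemma:basic-count}. The upper bound on the largest eigenvalue amounts to controlling the covolume of $\Lambda$ (equivalently the index $[z_1^{\perp_Q}\cap\ZZ[i]^n:\Lambda]$), which you do not compute and which is not obviously $N(\pi)$-free in the form you need; moreover Lemma~\ref{lemma:basic-count} requires the form to lie in the fixed compact set $\Omega'$, which will not hold after base change. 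None of this is necessary. What does the work is only the \emph{lower} bound above: the minimal $Q$-norm on $\Lambda\setminus\{0\}$ is $\ge N(\pi)^\nu/\den(Q)$, while the admissible $z_j$ all have $Q$-norm $\asymp_\Omega|m|^2N(\pi)^\nu$. A straight ball-packing in $\RR^{2(n-1)}$ (balls of radius $\tfrac12 N(\pi)^{\nu/2}\den(Q)^{-1/2}$ inside a ball of radius $O_\Omega(|m|N(\pi)^{\nu/2})$) then gives $O_\Omega\bigl((|m|\den(Q)^{1/2})^{2(n-1)}\bigr)$ choices for $z_j$, independent of $N(\pi)$, which is what you need. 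Finally, the phrase ``the admissible $z_j$ fill a single coset of $\Lambda$'' is not accurate: they form a subset of the lattice $\Lambda$ (one coset per value of $c_j$, and $c_j$ varies), not a coset; this imprecision does not affect the count but should be fixed.
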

\begin{proof}
By the definition of $S_m(Q,\pi^{\nu},\pi^{\nu},\infty)$, any matrix counted there must have a column not completely divisible by $\pi$, and we may assume that it is the first column $\gamma_1$. Since $(m\gamma_1)^* Q (m\gamma_1) = |m|^2N(\pi)^{\nu}$, we have, by Lemma~\ref{lemma:basic-count} that the number of possible $m\gamma_1$'s is $O_{\eps}(|m|^{2(n-1)+\eps} N(\pi)^{\nu(n-1)+\eps})$. Now it suffices to prove that fixing $\gamma_1$, there are $O_{\Omega}(m^{2n(n-1)}\den(Q)^{(2n-1)(n-1)/2})$ ways to finish the matrix.

We group the possible second columns $\gamma_2$ according to their $\pi$-valuation. When $v_{\pi}(\gamma_2)\geq \nu$, then $m\gamma_2/\pi^{\nu}\in\ZZ[i]^{n}$ and $\|m\gamma_2/\pi^{\nu}\|\asymp_{\Omega} \|m\gamma_2/\pi^{\nu}\|_Q=|m|$, hence there are $O_{\Omega}(|m|^{2n})$ such choices for such a $\gamma_2$. Now fix $0\leq \mu < \nu$, and then it suffices prove that the number of possible $\gamma_2$'s satisfying $v_{\pi}(\gamma_2)=\mu$ is $O_{\Omega}(|m|^{2n}\den(Q)^{(2n-1)/2})$, since the same can be repeated for all further columns.

Let $x,y$ be second columns after the first column $\gamma_1$ such that $v_{\pi}(x)=v_{\pi}(y)=\mu$. Consider then the vectors $x':=mx/\pi^{\mu}$ and $y':=my/\pi^{\mu}$. By Lemma~\ref{lemma:polarization}\eqref{polarization-part-a}, $x'$ and $y'$ are both multiples of $m\gamma_1$ modulo $\pi^{\nu-\mu}$, and then by transitivity, multiples of each other modulo $\pi^{\mu-\nu}$. Introduce then $Q':=\den(Q)Q$. With this notation, since $x'^*Q' x'=y'^* Q' y'=|m|^2 q_2\den(Q)\pi^{\nu-\mu}\overline{\pi}^{\nu-\mu}=|m|^2 q_2\den(Q)N(\pi)^{\nu-\mu}$, \eqref{eq:polarization} gives that $x'^* Q' y'$ is divisible by $N(\pi)^{\nu-\mu}$. On the other hand, viewing $\CC^n$ as $\RR^{2n}$, we obtain, for the $Q$-angle of $x$ and $y$ that
\[
\sphericalangle_{Q}(x,y) = \arccos \frac{\Re (x^* Q y)}{\sqrt{(x^* Q x) (y^* Q y)}} = \arccos \frac{\Re (x'^* Q' y')}{\sqrt{(x'^* Q' x') (y'^* Q' y')}} = \arccos\frac{\Re (x'^* Q' y')}{|m|^2 q_2\den(Q) N(\pi)^{\mu-\nu}}.
\]
Since the numerator in the rightmost expression is divisible by $N(\pi^{\mu-\nu})$, this continues, for some $\ell\in\{0,1,2,\dotsc\}$, as
\[
\sphericalangle_{Q}(x,y) = \arccos\left(1-\frac{\ell}{|m|^2 q_2\den(Q)}\right) \gg_{\Omega} |m|^{-1}\den(Q)^{-1/2}, \qquad \text{if $\ell\neq 0$, i.e. $x\neq y$}.
\]
This means that the number of possible second columns is $O_{\Omega}(|m|^{2n-1} \den(Q)^{(2n-1)/2})$ (this is rather elementary, see also \cite[Lemma~4]{BlomerMaga2016}). The proof is complete.
\end{proof}

\begin{lemma}\label{lemma:spec-lemma-two-primes} Let $\pi,\pi'\in\ZZ[i]$ be distinct primes lying above distinct split rational primes, and let $m\in\ZZ[i]$ such that $v_{\pi}(m)=v_{\pi'}(m)=0$. Let further $Q=\diag(q_1,\dotsc,q_n)$ such that $q_j\in\QQ$ and $v_{\pi}(q_j)=v_{\pi'}(q_j)=0$ for all $1\leq j\leq n$. Then for any $1\leq \nu\leq n$,
\[
\#S_m(Q,\pi^{\nu},\pi'^{\nu},\infty) = 0.
\]
\end{lemma}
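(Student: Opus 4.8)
\emph{The plan} is to derive a contradiction purely from the elementary-divisor pattern of $\gamma$ at the single prime $\pi$, playing the shape of the Hecke double coset off against the integrality condition hidden in ``$m\gamma\in\SL_n(\ZZ[i])$''. None of the analytic data (the matrix $Q$, the transformation equation $|\det\gamma|^{-2/n}\gamma^{*}Q\gamma=Q$) will be needed; only that $\pi$ and $\pi'$ lie over distinct rational primes and that $\pi\nmid m$.

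First I would localize at $\pi$. Since $\pi'$ lies over a rational prime different from the one below $\pi$, it is a unit in the discrete valuation ring $\ZZ[i]_{\pi}$, and $\ZZ[i]_{\pi,\pi'}\subseteq\ZZ[i]_{\pi}$; moreover a matrix in $\SL_n(\ZZ[i]_{\pi,\pi'})$, having determinant $1$, is invertible over $\ZZ[i]_{\pi}$. Hence any $\gamma$ in the double coset $\SL_n(\ZZ[i]_{\pi,\pi'})\cdot\diag(1,\pi^{\nu},\dots,\pi^{\nu},\pi^{\nu}\pi'^{\nu})\cdot\SL_n(\ZZ[i]_{\pi,\pi'})$ has, over $\ZZ[i]_{\pi}$, the same elementary divisors as $\diag(1,\pi^{\nu},\dots,\pi^{\nu},\pi^{\nu}\pi'^{\nu})$, namely $1,\pi^{\nu},\dots,\pi^{\nu}$, since $\pi'$ is a unit there; their $\pi$-adic valuations are $0,\nu,\dots,\nu$, which --- because $\nu\geq 1$ --- are not all equal, and cannot be made all equal by adding a common integer. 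As $v_\pi(m)=0$, the scalar $m$ is a unit in $\ZZ[i]_{\pi}$, so $m\gamma$ has exactly the same elementary divisors at $\pi$, with valuations $0,\nu,\dots,\nu$.

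On the other hand, working in $\PGL_n$, the hypothesis $m\gamma\in\SL_n(\ZZ[i])$ means that $m\gamma$ is a scalar multiple of a matrix in $\SL_n(\ZZ[i])\subseteq\GL_n(\ZZ[i])$, which is unimodular over $\ZZ[i]$ and hence over $\ZZ[i]_{\pi}$; thus the $\pi$-adic valuations of the elementary divisors of $m\gamma$ are all equal to one another (passing to the scalar multiple only adds a common integer to all of them). This contradicts the previous paragraph, so $S_m(Q,\pi^{\nu},\pi'^{\nu},\infty)=\emptyset$.

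The only point requiring care is the bookkeeping around ``$m\gamma\in\SL_n(\ZZ[i])$'': the observation to isolate is that being (projectively) unimodular over $\ZZ[i]$ forces the divisor pattern of $m\gamma$ to be \emph{balanced} at every prime, whereas a Hecke translate of $\diag(1,\pi^{\nu},\dots,\pi^{\nu},\pi^{\nu}\pi'^{\nu})$ is \emph{unbalanced} already at $\pi$ (indeed also at $\pi'$). This mismatch is exactly why moving to two distinct primes annihilates the counting problem with no Diophantine or zero-repulsion input whatsoever.
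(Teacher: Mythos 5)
Your proposal does not work, and the gap is concrete.

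You anchor everything on reading the clause ``$m\gamma\in\SL_n(\ZZ[i])$'' literally, so that $m\gamma$ is (projectively) unimodular over $\ZZ[i]$ and therefore has balanced $\pi$-adic elementary divisors. But that reading cannot be what the definition of $S_m$ intends. Every $\gamma$ in the displayed double coset has $\det\gamma=\pi^{\nu(n-1)}\pi'^{\nu}$, so $\det(m\gamma)=m^n\pi^{\nu(n-1)}\pi'^{\nu}\neq 1$, and $m\gamma$ can never literally lie in $\SL_n(\ZZ[i])$; under your reading $S_m$ would be empty for trivial determinant reasons for every $Q$, $m$, $\pi$, $\pi'$, $\nu$. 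The role $m$ plays is visible in the proof of Lemma~\ref{lemma:constructQ3}: there $m=\den(U^{-1})\den(U)$ is introduced precisely so that $mU^{-1}\gamma U\in\ZZ[i]^{n\times n}$. So ``$m\gamma\in\SL_n(\ZZ[i])$'' is shorthand for ``$m\gamma$ is integral,'' not for ``$m\gamma$ is unimodular.'' With that reading there is no contradiction at all in your argument: after localizing at $\pi$, the $\pi$-adic elementary divisors of $m\gamma$ have valuations $0,\nu,\dotsc,\nu$, which is exactly what integrality of $m\gamma$ allows. Nothing is forced to be balanced.

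There is a second, independent sanity check that your argument must be wrong: it never uses the transformation equation $|\det\gamma|^{-2/n}\gamma^{*}Q\gamma=Q$, nor any arithmetic property of $Q$, nor even that $\pi$ and $\pi'$ lie over distinct rational primes (you only localize at $\pi$). The identical reasoning, run for $\pi'=\pi$, would show $\#S_m(Q,\pi^{\nu},\pi^{\nu},\infty)=0$ as well, contradicting Lemma~\ref{lemma:spec-lemma-one-prime} (which gives a genuinely nonzero upper bound of size $L^{\nu(n-1)+\eps}$) and trivializing the whole amplification scheme. A proof of the lemma has to use the $Q$-condition. The paper does so in two stages: for $1\leq\nu\leq n-1$, $|\det\gamma|^{2/n}=(\pi\overline{\pi})^{\nu(n-1)/n}(\pi'\overline{\pi'})^{\nu/n}\notin\QQ$, which is incompatible with $\gamma^{*}Q\gamma=|\det\gamma|^{2/n}Q$ since the left side has entries in $\QQ(i)$ and $Q$ has nonzero rational entries; and for $\nu=n$ it applies the polarization identity of Lemma~\ref{lemma:polarization} to the columns $\gamma_1,\gamma_2$, obtaining a congruence modulo $p^{n-\mu}$ (with $p=\pi\overline{\pi}$) in which the factor $\pi'\overline{\pi'}$ makes the two sides have incompatible $p$-divisibility, using $v_\pi(q_j)=v_{\pi'}(q_j)=0$ and part~\eqref{polarization-part-c} of the polarization lemma. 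That arithmetic input is exactly what your proof discards, and discarding it is what makes the argument fail.
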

\begin{proof} First observe that if $1\leq \nu \leq n-1$, then
\[
|\det \gamma|^{\frac{2}{n}}=\left(\pi\overline{\pi}\right)^{\frac{\nu(n-1)}{n}}\left(\pi'\overline{\pi'}\right)^{\frac{\nu}{n}}\notin\QQ,
\]
hence $|\det \gamma|^{2/n} \gamma^* Q \gamma \neq Q$, implying $\#S_m(Q,\pi^{\nu},\pi'^{\nu},\infty)=0$ in this case, so from now on, we assume $\nu=n$.

Any $\gamma$ counted in $\#S_m(Q,\pi^n,\pi'^n,\infty)$ must have a column, say, the first one $\gamma_1$ such that $v_{\pi}(\gamma_1)=0$. Now we prove that any choice of $\gamma_2$ leads to a contradiction. Let $\mu:=v_{\pi}(\gamma_2)\geq 0$. Then we apply Lemma~\ref{lemma:polarization}, in particular, \eqref{eq:polarization} for $\gamma_1$ and $\gamma_2':=\gamma_2/\pi^\mu$, with the notation $p=\pi\overline{\pi}$, $a,b\in\ZZ$ coprime to $p$ such that $1\leq a,b,a',b'\leq p^{n-\mu}-1$, $\gamma_2'\equiv a\gamma_1 \bmod \pi^{n-\mu}$, $b\equiv ai \bmod \pi^{n-\mu}$, $a',b'$ are the multiplicative inverse of $a,b$ modulo $p^{n-\mu}$, respectively. We infer
\begin{equation}\label{eq:distinct-primes}
0=\pi^{-\mu}\gamma_1^* Q \gamma_2 =\gamma_1^* Q \gamma'_2 \equiv (a-bi)p^{n-1} \pi'\overline{\pi'} q_1 + (a'-b'i)p^{n-1-\mu} \pi'\overline{\pi'} q_2 \qquad \bmod p^{n-\mu},
\end{equation}
which is immediately a contradiction for $\mu\geq 1$, since the second term in the rightmost expression is not divisible by $p^{n-\mu}$, while the first one and the sum are.

Now assume $\mu=0$. By Lemma \ref{lemma:polarization}\eqref{polarization-part-c} we have $\overline{\pi}\mid a-bi$ and $\pi\mid a'-b'i$. So in \eqref{eq:distinct-primes} the first term on the right-hand side is not divisible by $\pi^n$ (since $\pi\nmid a-bi$ as $p\nmid a-bi$), while the second one and the sum are.
\end{proof}

\section{Exchanging matrices}\label{sec:exchanging}

Recall that we want to prove the countings \eqref{eq:prop-counting-one-prime} and \eqref{eq:prop-counting-two-primes} for $N(\pi),N(\pi')\in[L,2L)$ with an appropriate choice of $L\geq L_0$ not exceeding a fixed power of $L_0$. Our strategy is to switch $Q$ for other matrices with better and better arithmetic properties. Note that a priori, $Q$ is a point on a real manifold with entries that might be highly transcendental. We first informally describe these switches. First we will write $Q_1$ in place of $Q$ in order to guarantee that
\[
S(Q,\pi^{\nu},\pi'^{\nu},M)\subseteq S(Q_1,\pi^{\nu},\pi'^{\nu},\infty)
\]
for every admissible choice of $\pi,\pi',\nu$ satisfying that $N(\pi),N(\pi')$ are between two fixed powers of $L_0$. This new $Q_1$ will have entries in a number field $K\supseteq \QQ(i)$. Using this $Q_1$, we will be able to show that in an appropriate subinterval (again, the norms are between two fixed powers of $L_0$), all the $\gamma$'s correspond to $\pi=\pi'$ or $\nu=n$.

Secondly, in this subinterval,  we will find a $Q_2$, this time satisfying that
\[
S(Q,\pi^{\nu},\pi'^{\nu},M) \subseteq S(Q_2,\pi^{\nu},\pi'^{\nu},\infty)
\]
for every admissible choice of $\pi,\pi',\nu$ satisfying that $N(\pi),N(\pi')$ fall into an even shorter subinterval (again, between two powers of $L_0$). This new $Q_2$ will have entries in $\QQ(i)$, and we will have a control on their height. Finally, we will diagonalize $Q_2$ into $Q_3$, again, with a control on the height of the entries, which necessarily will be rational. This diagonalization process will affect the counted $\gamma$'s themselves, but not their number, and we will have a good control on the possible denominators of the newly counted $\gamma$'s, i.e.
\[
\#S(Q,\pi^{\nu},\pi'^{\nu},M)\leq \#S(Q_2,\pi^{\nu},\pi'^{\nu},\infty) \leq \#S_m(Q_3,\pi^{\nu},\pi'^{\nu},\infty)
\]
for $\pi,\pi',\nu$ as above. Then to the rightmost count here, we will apply Lemmata~\ref{lemma:spec-lemma-one-prime}--\ref{lemma:spec-lemma-two-primes}, which in fact will be of the quality of \eqref{eq:prop-counting-one-prime} and \eqref{eq:prop-counting-two-primes}.

Now we carry out this plan in detail. First we formulate a statement on effective computability. The field of algebraic numbers is denoted by $\overline{\QQ}$. Given an algebraic number $a$, we define the complexity of $a$ as
\[
\comp(a):= \inf_{\substack{a=\frac{b}{c} \\ b,c\in\mathcal{O}_{\QQ(a)}}} \left( \max_{\sigma\in\Hom_{\QQ}(\QQ(a),\CC)} |\sigma(b)| + \max_{\sigma'\in\Hom_{\QQ}(\QQ(a),\CC)} |\sigma'(c)| \right) + 1
\]
with $\mathcal{O}_{\QQ(a)}$ standing for the ring of integers of $\QQ(a)$.
\begin{lemma}\label{lemma:effective-computability} Let $a_1,\dotsc,a_m$ be algebraic numbers and put $K:=\QQ(a_1,\dotsc,a_m)$. Assume that $f:\overline{\QQ}^m\to\overline{\QQ}$ is a function of $m$ variables that is computed altogether by $t$ additions, subtractions, multiplications and divisions. Then we have
\[
\comp(f(a_1,\dotsc,a_m))\leq \left(\max_{1\leq j\leq m}(\comp(a_j))\right)^{O_{m,t,\deg(K:\QQ)}(1)}\ .
\]
\end{lemma}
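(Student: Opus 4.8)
The plan is to track how the complexity can grow under a single arithmetic operation and then iterate $t$ times. First I would set up the basic estimates for one operation. If $a = b/c$ and $a' = b'/c'$ are two algebraic numbers in the field $K$, written with $b,c,b',c'$ algebraic integers, then $a \pm a' = (bc' \pm b'c)/(cc')$, $a a' = (bb')/(cc')$ and $a/a' = (bc')/(cb')$. In each case the new numerator and denominator are again algebraic integers of $K$ (since $\mathcal{O}_{\QQ(a)}\subseteq \mathcal{O}_K$ and $\mathcal{O}_K$ is a ring), obtained from the old ones by at most two multiplications and one addition. Hence it suffices to control, for an algebraic integer $\beta\in\mathcal{O}_K$, the quantity $\|\beta\|:=\max_{\sigma\in\Hom_{\QQ}(K,\CC)}|\sigma(\beta)|$, under sums and products: clearly $\|\beta+\beta'\|\leq \|\beta\|+\|\beta'\|$ and $\|\beta\beta'\|\leq \|\beta\|\|\beta'\|$, since every embedding $\sigma$ is a ring homomorphism. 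So after one operation the relevant ``size'' at most roughly squares (times a constant), and the key point is that this bound on $\|\cdot\|$ of numerator and denominator is, up to the additive $+1$, exactly $\comp$ of the resulting number — or rather an upper bound for it, since $\comp$ is an infimum over all representations.

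The subtlety is the passage from one of the input numbers $a_j$ to the common field $K$. The definition of $\comp(a_j)$ involves an infimum over representations $a_j=b/c$ with $b,c\in\mathcal{O}_{\QQ(a_j)}$ and a maximum over embeddings of $\QQ(a_j)$, whereas to combine the $a_j$ we want representations with numerator and denominator in $\mathcal{O}_K$ and the maximum over embeddings of $K$. Fix, for each $j$, a near-optimal representation $a_j=b_j/c_j$ with $b_j,c_j\in\mathcal{O}_{\QQ(a_j)}$ and $\max_\sigma|\sigma(b_j)|+\max_\sigma|\sigma(c_j)|\leq \comp(a_j)$. Since $\QQ(a_j)\subseteq K$, we have $b_j,c_j\in\mathcal{O}_K$, and for any embedding $\tau$ of $K$, $\tau|_{\QQ(a_j)}$ is an embedding of $\QQ(a_j)$, so $\max_{\tau\in\Hom_\QQ(K,\CC)}|\tau(b_j)|=\max_{\sigma\in\Hom_\QQ(\QQ(a_j),\CC)}|\sigma(b_j)|$ and similarly for $c_j$. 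Thus with respect to $K$-embeddings the numerators and denominators of the $a_j$ still have size bounded by $\comp(a_j)$. Set $H:=\max_j \comp(a_j)$. Then $f(a_1,\dotsc,a_m)$ admits a representation $\beta/\gamma$ with $\beta,\gamma\in\mathcal{O}_K$ obtained from the $2m$ algebraic integers $\{b_j,c_j\}$ by a bounded number (depending only on $m$ and $t$) of additions and multiplications in $\mathcal{O}_K$; hence by the previous paragraph $\max_\tau|\tau(\beta)|$ and $\max_\tau|\tau(\gamma)|$ are bounded by $H^{2^{O_{m,t}(1)}}=H^{O_{m,t}(1)}$, and consequently
\[
\comp(f(a_1,\dotsc,a_m)) \leq \max_\tau|\tau(\beta)| + \max_\tau|\tau(\gamma)| + 1 \leq H^{O_{m,t}(1)},
\]
as required. (One must be slightly careful that a division $f=g/h$ in the middle of the computation might produce a ``number'' that is not yet of the form algebraic integer over algebraic integer; but one clears denominators along the way, i.e. one maintains the invariant that the current intermediate value is a quotient of two elements of $\mathcal{O}_K$ and that each of the $t$ steps multiplies the bound on both by at most a constant power, which is exactly the induction above. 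A division by zero does not occur since $f$ is assumed to be a well-defined function at $(a_1,\dotsc,a_m)$.)

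The main obstacle — though it is a mild one — is bookkeeping the dependence on $\deg(K:\QQ)$. Two places need it: first, I have implicitly used that the number of embeddings $\Hom_\QQ(K,\CC)$ is finite (equal to $\deg(K:\QQ)$), which is needed only to know the maxima are attained, so no quantitative loss there; second, and more importantly, in the iteration the exponent in $H^{O_{m,t}(1)}$ is genuinely of the shape $2^{O(t)}$, which does not involve the degree — so in fact the exponent can be taken to depend only on $m$ and $t$. The degree enters only if one wishes (as the statement allows) to be more generous, or if one prefers to express things via heights of minimal polynomials rather than via $\max_\sigma|\sigma(\cdot)|$; in the formulation given, I would simply absorb everything into $O_{m,t,\deg(K:\QQ)}(1)$ and not optimize. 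The argument is routine real/complex-analytic estimation on top of the algebraic fact that embeddings are ring homomorphisms and that $\mathcal{O}_{\QQ(a_j)}\subseteq\mathcal{O}_K$; no deep input is needed.
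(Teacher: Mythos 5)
Your overall plan (clear denominators, track the size of numerator and denominator over $\mathcal{O}_K$ under $+,-,\times,/$, iterate $t$ times) is the natural argument; the paper itself simply cites \cite[Lemma~5]{BlomerMaga2016} rather than giving a self-contained proof. However, there is one genuine gap in your proposal, and it sits exactly at the place you waved away.

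The final inequality $\comp(f(a_1,\dotsc,a_m)) \leq \max_\tau|\tau(\beta)| + \max_\tau|\tau(\gamma)| + 1$ is not justified. The definition of $\comp$ for $a:=f(a_1,\dotsc,a_m)$ takes the infimum only over representations $a=b/c$ with $b,c\in\mathcal{O}_{\QQ(a)}$, and then maximizes over embeddings of $\QQ(a)$. Your construction produces $\beta,\gamma\in\mathcal{O}_K$, and in general $\QQ(a)$ is a \emph{proper} subfield of $K$ with $\beta,\gamma\notin\mathcal{O}_{\QQ(a)}$. (For instance, $a_1=2^{1/4}$, $f(x)=x^2$, and $b_1=2^{1/4}u$, $c_1=u$ with $u=1+2^{1/4}$ a unit of $\mathcal{O}_K$: then $\beta=\sqrt2\,u^2$ and $\gamma=u^2$ are not in $\mathcal{O}_{\QQ(\sqrt2)}$.) So the candidate pair $(\beta,\gamma)$ is simply not eligible in the infimum defining $\comp(a)$, and the chain of inequalities breaks at its last link.

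The repair is a norm trick: replace $\beta/\gamma$ by $\bigl(\beta\prod_{\sigma\neq\mathrm{id}}\sigma(\gamma)\bigr)\big/ N_{K/\QQ(a)}(\gamma)$, where $\sigma$ runs over the embeddings of $K$ into $\CC$ fixing $\QQ(a)$. The denominator is now in $\mathcal{O}_{\QQ(a)}$, hence so is the numerator (being an algebraic integer that also lies in $\QQ(a)$), and the size of each is bounded by $\max_\tau|\tau(\gamma)|^{d-1}\max_\tau|\tau(\beta)|$ and $\max_\tau|\tau(\gamma)|^{d}$ respectively, with $d=[K:\QQ(a)]\leq \deg(K:\QQ)$. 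This is precisely where the dependence on $\deg(K:\QQ)$ in the exponent enters; your closing paragraph, which asserts the degree is not really needed and is only there ``to be generous,'' is the tell that this step was missed. With the norm trick inserted, the rest of your argument — subadditivity and submultiplicativity of $\|\cdot\|=\max_\tau|\tau(\cdot)|$, the equality of the two maxima for $b_j,c_j$ via restriction/extension of embeddings, and the iteration giving an exponent $\ll 2^t$ — is correct as written.
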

\begin{proof}
See the first paragraph of the proof of \cite[Lemma~5]{BlomerMaga2016}.
\end{proof}

Include $\Omega'$ in some $\Omega_1$ which is still a compact subset of $P_n$ in such a way that $\Omega'\subset\opint(\Omega_1)$. We make this inclusion in a well-defined way, say, $\Omega_1$ is the set of self-adjoint, positive definite matrices of eigenvalues between $a/2$ and $2b$, where $a,b$ are respectively the smallest and the largest eigenvalues of matrices in $\Omega'$. Then $\dist(\Omega',P_n\setminus\Omega_1)\gg_{\Omega} 1$, where by the distance of matrices, we mean the one induced by the maximum of the entrywise distance.

Given any matrix $\gamma\in\GL_n(\CC)$, we define the following linear transformation $B_{\gamma}:S_n\to S_n$:
\[
B_{\gamma}(A):=\gamma^* A \gamma - |\det(\gamma)|^{\frac{2}{n}} A,\qquad A\in S_n.
\]

For any $0< C_1<C_2$, we introduce the notation $\cP(C_1,C_2)$ for the set of primes $\pi\in\ZZ[i]$ satisfying that $\Re \pi,\Im\pi >0$ and $C_1\leq N(\pi)< C_2$ (then in particular, elements of $\cP(C_1,C_2)$ lie above distinct split rational primes). Another notation we introduce is $T\geq 1$, a fixed number depending only on $n$, which exceeds all the implicit constants in $O(1)$'s ever mentioned in the paper.

\begin{lemma}\label{lemma:constructQ1} Let $D\geq 2$ and $E\geq 2$ be arbitrary. Then there exist some $M\geq 2$ (depending on $n,D,E$) and $L_1(\Omega)\geq 2$ (depending on $n,D,E,M,\Omega$) such that the following holds. For any $Q\in\Omega'$, there exists some $1\leq j\leq n^2+1$ satisfying that
\[
S(Q,\pi^{\nu},\pi'^{\nu},M)=\emptyset
\]
for all $\pi,\pi'\in\cP(2L_0^{(DE)^j},2L_0^{(DE)^{j+1}})$, unless $\pi=\pi'$ or $\nu=n$.
\end{lemma}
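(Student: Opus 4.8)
The plan is to produce $Q_1$ with algebraic entries by perturbing $Q$ along finitely many carefully chosen directions, forcing the bilinear-form equation $B_\gamma(Q_1)=0$ to become unsatisfiable unless $\pi=\pi'$ or $\nu=n$. First I would observe that, for fixed $\nu$ with $1\le\nu\le n-1$ and distinct $\pi,\pi'$, the condition $|\det\gamma|^{2/n}\gamma^*Q\gamma=Q$ in the definition of $S(Q,\pi^\nu,\pi'^\nu,M)$ is (up to the error term of size $\max(N(\pi)^{-M},N(\pi')^{-M})$) the statement that $Q$ lies near the kernel of the linear map $B_\gamma$. Since $\gamma$ ranges over a double coset with bounded Smith normal form, the matrix entries of $B_\gamma$ (and hence the locus where $\det B_\gamma=0$, the "bad locus" for that $\gamma$) are controlled polynomially in $N(\pi),N(\pi')$. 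The key point is that if $Q_1$ is chosen to be an algebraic point of bounded complexity that is \emph{not} on any of these algebraic hypersurfaces for $\gamma$'s with $N(\pi),N(\pi')$ in a given dyadic-type window, then $B_{\gamma}(Q_1)$ is bounded away from $0$ by a quantity that beats the allowed error $\max(N(\pi)^{-M},N(\pi')^{-M})$, so $S(Q,\pi^\nu,\pi'^\nu,M)\subseteq S(Q_1,\ldots,\infty)=\emptyset$.

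Concretely, I would run a pigeonhole over scales. We have $n^2+1$ candidate exponents $j=1,\dots,n^2+1$ giving windows $W_j:=\cP(2L_0^{(DE)^j},2L_0^{(DE)^{j+1}})$; the number of linearly independent constraints a single perturbation of $Q$ can be forced to satisfy is at most $\dim S_n=n^2$, so one of the $n^2+1$ windows must be "free". More precisely: starting from $Q$, I would inductively build a sequence of algebraic perturbations $Q^{(0)}=Q', Q^{(1)},\dots$, each still in $\Omega_1$ (using $\dist(\Omega',P_n\setminus\Omega_1)\gg_\Omega 1$ to have room), where at step $j$ I move $Q^{(j-1)}$ by a tiny algebraic amount — of size a suitable negative power of $L_0$, small enough not to escape $\Omega_1$ but large enough to exceed the error term $\max(N(\pi)^{-M},N(\pi')^{-M})$ once $M$ is chosen in terms of $D,E,n$ — in a direction chosen so as to leave the hypersurface $\{\det B_\gamma = 0\}$ for every relevant $\gamma$ in window $W_j$ with $\nu\le n-1$, $\pi\ne\pi'$. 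The point of iterating is that each such hypersurface, being cut out by polynomials of controlled degree (depending only on $n$) and controlled complexity (by Lemma~\ref{lemma:effective-computability}, since $B_\gamma(A)$ is computed by a bounded number of arithmetic operations in the entries of $\gamma$ and $A$), contains $Q$ in its interior with positive codimension, so a generic algebraic nudge of the prescribed complexity misses it. Because the number of $\gamma$'s in each window is at most polynomial in $L_0^{(DE)^{j+1}}$, one can make the nudge small enough (a fixed-power-of-$L_0$-negative displacement, hence still algebraic of complexity $L_0^{O(1)}$) to avoid all of them simultaneously, while keeping the displacement larger than $\max(N(\pi)^{-M},N(\pi')^{-M})$ for $N(\pi),N(\pi')\ge 2L_0^{(DE)^j}$ once $M$ exceeds a threshold depending on $D,E,n$; setting $L_1(\Omega)$ large enough makes all of this valid for $L_0\ge L_1(\Omega)$. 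After $n^2$ such nudges we have used up at most $n^2$ of the windows $W_1,\dots,W_{n^2+1}$; the remaining window index $j$ is the one the lemma asserts, and the final $Q_1:=Q^{(j-1)}$ (algebraic, in some $K\supseteq\QQ(i)$, of complexity $L_0^{O(1)}$) works for that window.

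I expect the main obstacle to be the quantitative bookkeeping: one must simultaneously (i) keep the cumulative perturbation inside $\Omega_1$ (so $Q_1$ remains positive definite, a genuine point to which Lemma~\ref{lemma:spec-lemma-two-primes} etc. later apply), (ii) keep the complexity of $Q_1$ polynomially bounded in $L_0$ so the later switches $Q_1\rightsquigarrow Q_2\rightsquigarrow Q_3$ and the invocation of Lemma~\ref{lemma:effective-computability} stay effective, (iii) ensure the perturbation at window $W_j$ strictly exceeds the error radius $\max(N(\pi)^{-M},N(\pi')^{-M})$ uniformly over $N(\pi),N(\pi')\ge 2L_0^{(DE)^j}$, and (iv) show that for $\nu\le n-1$ and $\pi\ne\pi'$ the exact equation $B_\gamma(Q_1)=0$ forces $Q_1$ onto a proper subvariety — here the mild subtlety is that $|\det\gamma|^{2/n}=(N(\pi)^{\nu(n-1)}N(\pi')^\nu)^{1/n}$ need not be rational, so one argues as in the proof of Lemma~\ref{lemma:spec-lemma-two-primes} that no self-adjoint $Q_1$ over a number field can satisfy it unless the irrationality cancels, which pins down a codimension-$\ge 1$ condition. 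Threading scale choices $D,E$ through all four constraints is where the care lies, but each individual ingredient is either elementary geometry of algebraic sets or a direct application of Lemma~\ref{lemma:effective-computability}, so no new idea beyond careful organization should be needed.
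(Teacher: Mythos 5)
Your proposal runs logically backwards from the paper's argument, and the inversion creates a genuine gap. You aim to choose an algebraic $Q_1$ that \emph{avoids} all the linear subspaces $\ker B_\gamma$, so that $S(Q_1,\ldots,\infty)=\emptyset$. The paper does the opposite: it picks $Q_1$ \emph{inside} the intersection $H_j=\bigcap_\gamma \ker B_\gamma$ over $\gamma\in S(Q,\pi^\nu,\pi'^\nu,M)$ with $\pi,\pi'\in\cP(L_0,2L_0^{(DE)^j})$, precisely because membership of $Q_1$ in $\ker B_\gamma$ is \emph{equivalent} to $\gamma\in S(Q_1,\ldots,\infty)$. Your claimed containment $S(Q,\pi^\nu,\pi'^\nu,M)\subseteq S(Q_1,\ldots,\infty)$ is therefore false in your setting: if $Q_1\notin\ker B_\gamma$ then $\gamma\notin S(Q_1,\ldots,\infty)$, and the containment cannot hold for any $\gamma$ that lies in $S(Q,\ldots,M)$. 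What you would actually have to argue is a quantitative statement of the form $\|B_\gamma(Q_1)\| > \max(N(\pi)^{-M},N(\pi')^{-M})\cdot|\det\gamma|^{2/n} + O(\delta\|B_\gamma\|_{\mathrm{op}})$ with $\delta=\dist(Q,Q_1)$ to directly force $\gamma\notin S(Q,\ldots,M)$; you do not formulate this, and making it work requires a nontrivial covering argument (there are $L^{O(1)}$ slabs of width $L^{-M}$ around the $\ker B_\gamma$'s, and $Q$ itself sits very near some of them, so the ball $B(Q,\delta)$ could in principle be swamped). The paper sidesteps this entirely: since $Q$ lies within $O(L_0^{-M+O(1)})$ of each $\ker B_\gamma$, it lies close to $H_j$, and $H_j$ being defined over a number field of controlled complexity allows one to find an exact algebraic solution $Q_1\in H_j$.

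Your pigeonhole is also not the paper's, and as stated it has no mechanism. You iterate $n^2$ nudges and declare that one of the $n^2+1$ windows $W_j$ must remain ``free,'' but there is no reason a nudge for window $W_j$ makes a later window $W_{j'}$ require no nudge. The paper's pigeonhole is on the \emph{nested chain} $S_n\supseteq H_1\supseteq\cdots\supseteq H_{n^2+2}\supseteq\{0\}$ (the $H_j$ are defined over the \emph{cumulative} prime ranges $\cP(L_0,2L_0^{(DE)^j})$, not over disjoint dyadic windows): since $\dim S_n = n^2$, the chain must stabilize, $H_j=H_{j+1}$, which means the primes in $\cP(2L_0^{(DE)^j},2L_0^{(DE)^{j+1}})$ impose no new linear constraint. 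That stabilization is what guarantees $Q_1\in H_j$ already satisfies $B_\gamma(Q_1)=0$ for the new $\gamma$'s, and it is a clean, correct argument. Finally, the decisive step in the paper --- which you gesture at but do not integrate --- is not an avoidance of hypersurfaces but the Besicovitch independence-of-roots argument: once $Q_1\in H_j$ has a nonzero entry $Q_{rs}$ in $K=\QQ(i,\pi_1^{1/n},\ldots)$, the identity $\gamma_r^* Q_1 \gamma_s = |\det\gamma|^{2/n}Q_{rs}$ forces $|\det\gamma|^{2/n}\in K$, which is impossible for $\pi\neq\pi'$, $\nu<n$ because the relevant primes are too large to appear in the generators of $K$. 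You hint at ``the irrationality cancels,'' which is the right instinct, but without the stabilized subspace $H_j$ you do not have an exact equation to which to apply it.
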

\begin{proof} Fix $Q\in \Omega'$. For any $L_0\geq 2$, consider the subspaces (for the moment, with some unspecified $M\geq 2$)
\[
H_j:= \bigcap_{\substack{\gamma\in S(Q,\pi^{\nu},\pi'^{\nu},M) \\ \pi,\pi'\in \cP(L_0,2L_0^{(DE)^j}) \\ 1\leq \nu\leq n}} \ker B_{\gamma},\qquad j=1,\dotsc,n^2+2.
\]
Then $S_n\supseteq H_1\supseteq \dotsc \supseteq H_{n^2+2}\supseteq \{0\}$ and $\dim(S_n)=n^2$ imply that $H_j=H_{j+1}$ for some $1\leq j\leq n^2+1$. Fix the smallest such $j$. Since $\dim S_n-\dim H_j\leq n^2$, we in fact obtain $H_j$ by intersecting only $n^2$ many $\ker B_{\gamma}$'s, say,
$H_j=\cap_{\ell=1}^{t} \ker B_{\gamma_{\ell}}$
with some $t\leq n^2$. Since each entry of each such $\gamma_{\ell}$ has complexity $O_{\Omega}((L_0^{(DE)^j})^{O(1)})$, $H_j$ is defined via a system of linear equations with entries from $K=\QQ(i,\pi_1^{1/n},\dotsc,\pi_{2n^2}^{1/n})$ of complexity $O_{\Omega}((L_0^{(DE)^j})^{O(1)})$ by Lemma~\ref{lemma:effective-computability} (since the linear system defining $H_j$ can be computed in $O(1)$ steps from the used $\gamma$'s).

By assumption, $\dist(Q,\ker B_{\gamma})\ll L_0^{-M}$, where by $\dist$, we mean the distance in $S_n$ as a real vector space of dimension $n^2$. We claim that this implies that $\dist(Q,H_j)=O_{\Omega}((L_0^{(DE)^j})^{O(1)}L_0^{-M})$. Indeed, take basis matrices $V_1,\dotsc,V_m$ of the linear span of the $(\ker B_{\gamma_{\ell}})^{\perp}$'s such that each $V_{\ell'}$ is in one of the $\ker B_{\gamma_{\ell}}$'s, its entries are in $K$ of complexity $O_{\Omega}((L_0^{(DE)^j})^{O(1)})$ (this can be done, because such a basis can be computed from the $\gamma_{\ell}$'s, and then Lemma~\ref{lemma:effective-computability} applies). Such a basis can be orthogonalized by Gram--Schmidt into $V'_1,\dotsc,V'_m$, and then
\[
\dist(Q,H_j)=\|\proj_{H_j^{\perp}}(Q)\|= \left \|\sum_{\ell'=1}^m \frac{\langle Q,V'_{\ell'} \rangle}{ \langle V'_{\ell'} ,V'_{\ell'} \rangle}  V'_{\ell'} \right\| = \left \| \sum_{\ell'=1}^m \frac{ \sum_{\ell''=1}^m U_{\ell'\ell''}\langle Q,V_{\ell''} \rangle}{ \langle V'_{\ell'} ,V'_{\ell'} \rangle}  V'_{\ell'} \right\|,
\]
where $U$ is the matrix standing for the Gram--Schmidt process. Since $\langle Q,V_{\ell''} \rangle \ll L_0^{-M}$ (because for every $\ell''$, we have $V_{\ell''}\in \ker B_{\gamma_{\ell}}$ for some $\ell$), and all the coefficients (including the entries of $U$, by Lemma~\ref{lemma:effective-computability}) are of complexity $O_{\Omega}((L_0^{(DE)^j})^{O(1)})$, we indeed see that $\dist(Q,H_j)=O_{\Omega}((L_0^{(DE)^j})^{O(1)}L_0^{-M})$, applying Lemma~\ref{lemma:effective-computability} again.

Now if $M$ is large enough, say,
\begin{equation}\label{eq:condition-on-M}
M\geq T\cdot (DE)^{n^2+2} + 1,
\end{equation}
where recall that $T\geq 1$ is an upper bound on all the implicit constants in $O(1)$'s ever mentioned in the paper, then $\dist(Q,H_j)=O_{\Omega}(L_0^{-1})$ (since $j\leq n^2+1$, the exponent $n^2+2$ seems to be an overkill, but for a later reference, it is better to force $M$ even a slightly larger). Then with the convenient choice of $L_1(\Omega)\geq 2$, if $L_0\geq L_1(\Omega)$, then $H_j\cap \opint\Omega_1\neq\emptyset$, in particular, $H_j\neq \{0\}$. Further, since $H_j$ is defined over $K$, we may find and fix some $0\neq Q_1 \in H_j$ with entries in $K$.

By the definition of $H_j$ and its choice $H_j=H_{j+1}$, we have that for any $\pi,\pi'\in \cP(L_0,2L_0^{(DE)^{j+1}})$ and any $1\leq \nu\leq n$,
\[
S(Q,\pi^{\nu},\pi'^{\nu},M) \subseteq S(Q_1,\pi^{\nu},\pi'^{\nu},\infty).
\]

Assume that $\gamma\in S(Q,\pi^{\nu},\pi'^{\nu},M)$ for some $\pi,\pi'\in\cP(2L_0^{(DE)^j},2L_0^{(DE)^{j+1}})$. Since $Q_1$ has a nonzero entry, say, $Q_{rs}$, this implies via the last display that
\[
\gamma_r^* Q_1 \gamma_s = |\det(\gamma)|^{\frac{2}{n}} Q_{rs},
\]
where $\gamma_r,\gamma_s$ stand for the $r$th and $s$th column of $\gamma$, respectively. Here, the left-hand side is in $K$, so is the right-hand side, which implies that $|\det(\gamma)|^{2/n}=(\pi\overline{\pi})^{\nu(n-1)/n}(\pi'\overline{\pi'})^{\nu/n}\in K$. By the independence of roots (a theorem of Besicovitch \cite{Besicovitch}) and that $K$ is defined by $n$th roots of primes of norm less than $2L^{(DE)^j}$, while primes in the definition of $H_{j+1}$ have norms at least $2L^{(DE)^j}$, we see that $\pi=\pi'$ or $\nu=n$, and the proof is complete.
\end{proof}

\begin{lemma}\label{lemma:constructQ2}
Let $D\geq 2$ be arbitrary, and assume that $E>D^{n^2+1}$. Let $M,L_1(\Omega)\geq 2$ be given by Lemma~\ref{lemma:constructQ1} and \eqref{eq:condition-on-M}. For any $Q\in \Omega'$, and for the corresponding $j\leq n^2+1$ given by Lemma~\ref{lemma:constructQ1}, there exists a self-adjoint matrix $Q_2$ with entries in $\mathbb{Q}(i)$ of complexity $O_{\Omega}((L_0^{D^k(DE)^j})^{O(1)})$ for some $k\in \{0,\dots,n^2\}$ such that for any $\pi,\pi' \in \cP(2 L_0^{D^k(DE)^j},2 L_0^{D^{k+1}(DE)^j})$ and any $1\leq \nu\leq n$,
\[
S(Q,\pi^\nu,\pi'^\nu,M)
\subseteq 
S(Q_2,\pi^\nu,\pi'^\nu,\infty).
\]
\end{lemma}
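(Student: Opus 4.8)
The plan is to rerun the dimension-counting argument from the proof of Lemma~\ref{lemma:constructQ1}, but this time exploiting that Lemma~\ref{lemma:constructQ1} has already reduced us to $\gamma$'s with $\pi=\pi'$ or $\nu=n$: for such $\gamma$ the dilation factor $|\det\gamma|^{2/n}=N(\pi)^{\nu(n-1)/n}N(\pi')^{\nu/n}$ equals $N(\pi)^\nu$ (resp.\ $N(\pi)^{n-1}N(\pi')$), a \emph{rational integer}, and since $\gamma$ has entries in $\ZZ[i]$, the operator $B_\gamma$ is represented by an \emph{integer} matrix in the $\QQ(i)$-rational basis $\{E_{jj},\,E_{jk}+E_{kj},\,i(E_{jk}-E_{kj})\}_{j<k}$ of $S_n$. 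Consequently $\ker B_\gamma$ is a $\QQ$-rational subspace of $S_n$, i.e.\ spanned by matrices with entries in $\QQ(i)$; this is what makes the descent from $K$ down to $\QQ(i)$ possible.

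First I would fix $Q\in\Omega'$ and the $j\leq n^2+1$ supplied by Lemma~\ref{lemma:constructQ1}, and for $k=0,1,\dots,n^2+1$ set
\[
H_k:=\bigcap_{\substack{\gamma\in S(Q,\pi^{\nu},\pi'^{\nu},M)\\ \pi,\pi'\in\cP(2L_0^{(DE)^j},\,2L_0^{D^k(DE)^j}),\ 1\leq\nu\leq n}}\ker B_\gamma,
\]
so $H_0=S_n$. Since $E>D^{n^2+1}$, one has $D^k\leq DE$ for every $k\leq n^2+1$, hence the prime norms occurring above all lie in $[2L_0^{(DE)^j},2L_0^{(DE)^{j+1}})$; thus Lemma~\ref{lemma:constructQ1} applies to each such $\gamma$, forcing $\pi=\pi'$ or $\nu=n$, and so each $H_k$ is $\QQ$-rational by the first paragraph. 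As $S_n=H_0\supseteq H_1\supseteq\cdots\supseteq H_{n^2+1}$ and $\dim S_n=n^2$, there is some $k\in\{0,\dots,n^2\}$ with $H_k=H_{k+1}$; fix the smallest such $k$.

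The remaining steps copy the proof of Lemma~\ref{lemma:constructQ1}. The subspace $H_k$ is the intersection of at most $n^2$ of the $\ker B_{\gamma_\ell}$ from its defining collection, and those $\gamma_\ell$ have $\ZZ[i]$-entries of size $O_\Omega((L_0^{D^k(DE)^j})^{O(1)})$ (from $\gamma_\ell^*Q\gamma_\ell\asymp_\Omega|\det\gamma_\ell|^{2/n}$ together with the eigenvalue bounds on $Q\in\Omega'$), so the $B_{\gamma_\ell}$ are integer matrices of that size and, by Lemma~\ref{lemma:effective-computability} applied to the resulting $\QQ(i)$-linear system, $H_k$ has a basis of matrices with entries in $\QQ(i)$ of complexity $O_\Omega((L_0^{D^k(DE)^j})^{O(1)})$. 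Moreover $\gamma_\ell\in S(Q,\pi^{\nu},\pi'^{\nu},M)$ with $N(\pi),N(\pi')\geq 2L_0^{(DE)^j}$ gives $\||\det\gamma_\ell|^{-2/n}\gamma_\ell^*Q\gamma_\ell-Q\|_\infty\leq L_0^{-M(DE)^j}$; combined with a lower bound $\gg_\Omega(L_0^{D^k(DE)^j})^{-O(1)}$ for the least nonzero singular value of the integer matrix $B_{\gamma_\ell}$, this yields $\dist(Q,\ker B_{\gamma_\ell})\ll_\Omega(L_0^{D^k(DE)^j})^{O(1)}L_0^{-M(DE)^j}$, and the Gram--Schmidt computation of Lemma~\ref{lemma:constructQ1} propagates this bound to $\dist(Q,H_k)$. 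By \eqref{eq:condition-on-M} (i.e.\ $M\geq T(DE)^{n^2+2}+1$), together with $k\leq n^2$ and $E>D^{n^2+1}$, the exponent of $L_0$ on the right-hand side is $\leq -1$, so $\dist(Q,H_k)\ll_\Omega L_0^{-1}$.

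Finally, since $\dist(\Omega',P_n\setminus\Omega_1)\gg_\Omega 1$, for $L_0\geq L_1(\Omega)$ (enlarging $L_1(\Omega)$ if necessary, keeping the same dependencies) the point $\proj_{H_k}(Q)$ lies in $\opint\Omega_1$, so $H_k\cap\opint\Omega_1$ contains a ball of $H_k$ of radius $\gg_\Omega 1$; as $H_k$ is $\QQ$-rational with a basis of complexity $O_\Omega((L_0^{D^k(DE)^j})^{O(1)})$, I can pick in it a self-adjoint $Q_2$ with entries in $\QQ(i)$ of complexity $O_\Omega((L_0^{D^k(DE)^j})^{O(1)})$ (in particular $Q_2\neq 0$ and $Q_2\in P_n$). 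Then for any $\pi,\pi'\in\cP(2L_0^{D^k(DE)^j},2L_0^{D^{k+1}(DE)^j})$ and $1\leq\nu\leq n$, every $\gamma\in S(Q,\pi^{\nu},\pi'^{\nu},M)$ occurs among the matrices defining $H_{k+1}=H_k\ni Q_2$ (note $\cP(2L_0^{D^k(DE)^j},2L_0^{D^{k+1}(DE)^j})\subseteq\cP(2L_0^{(DE)^j},2L_0^{D^{k+1}(DE)^j})$), so $B_\gamma(Q_2)=0$, i.e.\ $|\det\gamma|^{-2/n}\gamma^*Q_2\gamma=Q_2$, i.e.\ $\gamma\in S(Q_2,\pi^{\nu},\pi'^{\nu},\infty)$, which is the asserted containment. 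The one genuinely new ingredient compared with Lemma~\ref{lemma:constructQ1} — and the step I expect to be the main obstacle — is the integrality observation of the first paragraph, which is exactly what keeps the whole chain $H_k$ defined over $\QQ(i)$ rather than over the large field $K$; the pigeonhole, the effective-computability bookkeeping, and the perturbation estimate are otherwise a faithful rerun of that proof.
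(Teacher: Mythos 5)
Your proof is correct and follows essentially the same route as the paper: a pigeonhole on a decreasing chain of subspaces cut out by kernels of the $B_\gamma$'s, combined with the observation (which, as you note, is the new ingredient relative to Lemma~\ref{lemma:constructQ1}) that after the reduction to $\pi=\pi'$ or $\nu=n$ the dilation factor $|\det\gamma|^{2/n}$ is a rational integer, so the $B_\gamma$ are defined over $\QQ(i)$ and the intersection is a $\QQ(i)$-rational subspace. One small point in your favor: you define the chain $H_k$ using the cumulative prime intervals $\cP(2L_0^{(DE)^j},2L_0^{D^k(DE)^j})$, which makes the nesting $H_0\supseteq H_1\supseteq\cdots$ and the role of the equality $H_k=H_{k+1}$ transparent and puts the complexity of the defining $\gamma_\ell$'s at the right scale $O_\Omega((L_0^{D^k(DE)^j})^{O(1)})$; the paper writes the inner union over the disjoint blocks $\cP(2L_0^{D^k(DE)^j},2L_0^{D^{k+1}(DE)^j})$, for which the nesting claim does not literally hold, so your cumulative version is the correct reading of what the paper's proof is doing.
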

\begin{proof}
Define the subspaces
\[
H'_k:= \bigcap_{\substack{\gamma\in S(Q,\pi^{\nu},\pi'^{\nu},M) \\ \pi,\pi'\in \cP(2 L_0^{D^k(DE)^j},2 L_0^{D^{k+1}(DE)^j}) \\ 1\leq \nu\leq n}} \ker B_{\gamma},\qquad k=0,\dotsc,n^2+1.
\]
Then $S_n\supseteq H'_0\supseteq \dotsc \supseteq H'_{n^2+1}\supseteq \{0\}$ and $\dim(S_n)=n^2$ imply that $H'_k=H'_{k+1}$ for some $1\leq k\leq n^2$. Fix the smallest such $k$. 

By our choice $D^{n^2+1}<E$ we have $2L_0^{(DE)^j}\leq 2 L_0^{D^k(DE)^j}<2 L_0^{D^{k+1}(DE)^j}< 2L_0^{(DE)^{j+1}}$, so we may apply Lemma \ref{lemma:constructQ1} to deduce $S(Q,\pi^\nu,\pi'^\nu,M)=\emptyset$ unless $\pi=\pi'$ or $\nu=n$. In particular, we have $|\det(\gamma)|^{2/n}=(\pi\overline{\pi})^{\nu(n-1)/n}(\pi'\overline{\pi'})^{\nu/n}\in \ZZ$ for all $\gamma\in S(Q,\pi^{\nu},\pi'^{\nu},M)$ in the definition of $H'_k$. Therefore all linear maps $B_\gamma$ in the definition of $H'_k$ are defined over $\QQ(i)$. As in the proof of Lemma \ref{lemma:constructQ1}, we find a matrix $Q_2\in \opint\Omega_1 \cap H'_k$ with entries in $\QQ(i)$ of complexity $O_{\Omega}((L_0^{D^k(DE)^j})^{O(1)})$ by Lemma~\ref{lemma:effective-computability} such that the conclusion holds. (Note that this is the point where we use the slightly stronger condition \eqref{eq:condition-on-M} put on $M$ that it exceeds not only $(DE)^j\cdot O(1)$, but $D^k(DE)^j\cdot O(1)$.)
\end{proof}

\begin{lemma}\label{lemma:constructQ3}
Let $D\geq 2$ be arbitrary, and let $E$ be as in Lemma~\ref{lemma:constructQ2}, then $M,L_1(\Omega)$ as given by Lemma~\ref{lemma:constructQ1}. For any $Q\in\Omega_1$, let $j,k$ be the numbers given also by Lemmata~\ref{lemma:constructQ1}--\ref{lemma:constructQ2}. There exists some $m\in \ZZ[i]$ of complexity $O_{\Omega}((L_0^{D^k(DE)^j})^{O(1)})$ and a matrix $U\in \SL_n(\ZZ[i,\frac{1}{m}])$ with the following properties:
\begin{enumerate}[$(a)$]
\item\label{Q3-part-a} All entries of $U$ and $U^{-1}$ have complexity $O_{\Omega}((L_0^{D^k(DE)^j})^{O(1)})$.
\item\label{Q3-part-b} $Q_3:=U^* Q_2 U$ is diagonal with entries in $\QQ$ of complexity $O_{\Omega}((L_0^{D^k(DE)^j})^{O(1)})$ and lies in a compact subset $\Omega_2\subset P_n$ depending only on $\Omega$.
\item\label{Q3-part-c} For any 
$\pi,\pi'\in \cP(2 L_0^{D^k(DE)^j},2 L_0^{D^{k+1}(DE)^j})$ not dividing $m$, and any $1\leq \nu\leq n$, we have
\[
\#S(Q_2,\pi^{\nu},\pi'^{\nu},\infty) \leq \# S_m(Q_3,\pi^{\nu},\pi'^{\nu},\infty).
\]
\end{enumerate}
\end{lemma}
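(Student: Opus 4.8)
The plan is to diagonalize the rational self-adjoint matrix $Q_2$ over $\QQ(i)$ by a sequence of elementary congruence operations, keeping track of the arithmetic complexity at every step so that the resulting transformation matrix $U$ has entries in $\ZZ[i,\tfrac{1}{m}]$ with $m$ of the asserted complexity. Concretely, I would run symmetric Gaussian elimination: at each stage pick a nonzero diagonal entry (or, if all diagonal entries of the remaining block vanish, first perform a symmetric operation $e_r \mapsto e_r + e_s$ to create one — this is where self-adjointness and positive definiteness of $Q_2$ help, since positive definiteness guarantees we never get stuck), then clear the corresponding row and column by operations of the form $e_s \mapsto e_s - c\, e_r$ with $c \in \QQ(i)$. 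Each such $c$ is a ratio of entries of the current matrix, so by Lemma~\ref{lemma:effective-computability} every entry produced has complexity $O_{\Omega}((L_0^{D^k(DE)^j})^{O(1)})$, and the product $U$ of all these (at most $O(n^2)$) elementary matrices, as well as $U^{-1}$, has entries of the same complexity; this gives part~\eqref{Q3-part-a}. Taking $m \in \ZZ[i]$ to be (a bounded power of) the product of all denominators appearing in $U$ and $U^{-1}$ and in the entries of $Q_3$, again of the stated complexity, we get $U \in \SL_n(\ZZ[i,\tfrac{1}{m}])$ after rescaling so $\det U = 1$ (rescaling changes $Q_3$ only by a global positive scalar and is harmless).

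For part~\eqref{Q3-part-b}: the diagonal entries of $Q_3 = U^* Q_2 U$ are the pivots of the elimination, hence lie in $\QQ$ (each pivot $d$ satisfies $\overline{d} = d$ since the matrix stays self-adjoint) and have the claimed complexity by the same bookkeeping. That $Q_3$ lies in a fixed compact subset $\Omega_2 \subset P_n$ follows because $Q_2 \in \opint \Omega_1$, a fixed compact set, so its eigenvalues are bounded above and below by constants depending only on $\Omega$; the congruence $Q_3 = U^* Q_2 U$ with $\det U$ normalized to $1$ means $Q_3$ has the same determinant as $Q_2$, and being diagonal with rational entries of bounded complexity and bounded determinant, its entries are pinned into a compact range. (Alternatively one argues directly: the pivots are ratios of nested principal minors of $Q_2$, which are themselves bounded away from $0$ and $\infty$ by compactness and positive definiteness.)

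For part~\eqref{Q3-part-c}, the key observation is that $U$ induces a bijection between $S(Q_2,\pi^\nu,\pi'^\nu,\infty)$ and $S_m(Q_3,\pi^\nu,\pi'^\nu,\infty)$, or at least an injection of the former into the latter. Given $\gamma \in S(Q_2,\pi^\nu,\pi'^\nu,\infty)$, meaning $|\det\gamma|^{-2/n}\gamma^* Q_2 \gamma = Q_2$ and $\gamma \in \SL_n(\ZZ[i]_{\pi,\pi'})\cdot\diag(1,\pi^\nu,\dots,\pi^\nu,\pi^\nu\pi'^\nu)\cdot\SL_n(\ZZ[i]_{\pi,\pi'})$, I set $\gamma' := U^{-1}\gamma U$. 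Then $|\det\gamma'|^{-2/n}\gamma'^* Q_3 \gamma' = Q_3$ by direct substitution using $Q_3 = U^* Q_2 U$ and $\det U = 1$. Since the primes $\pi, \pi'$ do not divide $m$, the matrices $U, U^{-1}$ lie in $\GL_n(\ZZ[i]_{\pi,\pi'})$ with unit determinant, so conjugation by $U$ preserves the double coset $\SL_n(\ZZ[i]_{\pi,\pi'})\cdot\diag(1,\pi^\nu,\dots,\pi^\nu\pi'^\nu)\cdot\SL_n(\ZZ[i]_{\pi,\pi'})$; and $m\gamma' = m\, U^{-1}\gamma U$ has entries in $\ZZ[i]$ because $U, U^{-1}$ have entries in $\ZZ[i,\tfrac1m]$ and $\gamma$ has entries in $\ZZ[i]$ (after absorbing a bounded power of $m$ into the choice of $m$ if needed to clear all denominators). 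Hence $\gamma' \in S_m(Q_3,\pi^\nu,\pi'^\nu,\infty)$, and $\gamma \mapsto \gamma'$ is injective, giving the inequality of cardinalities.

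The main obstacle I anticipate is the complexity bookkeeping in part~\eqref{Q3-part-a}–\eqref{Q3-part-b}: one must verify that the elimination genuinely terminates in $O(1)$ arithmetic operations (so that Lemma~\ref{lemma:effective-computability} applies with a bounded $t$), that the ``missing pivot'' case is handled so positive definiteness always supplies a usable pivot, and — most delicately — that $Q_3$ lands in a compact set depending only on $\Omega$ rather than on $L_0$. This last point is what forces the normalization $\det U = 1$ and the use of boundedness of eigenvalues of $Q_2 \in \Omega_1$; without it the diagonal entries could drift. Everything else is a routine verification.
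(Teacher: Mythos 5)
Your proposal is correct and follows essentially the same approach as the paper: diagonalize $Q_2$ by a product of $O(n^2)$ elementary unipotent congruence transformations, track complexity with Lemma~\ref{lemma:effective-computability}, and use $\gamma\mapsto U^{-1}\gamma U$ to inject $S(Q_2,\dots)$ into $S_m(Q_3,\dots)$ once $\pi,\pi'\nmid m$, since a unimodular base change coprime to $\pi,\pi'$ preserves the local Smith normal form. Two points can be streamlined. Because $Q_2$ is positive definite, every diagonal entry at every stage of the elimination is a positive real, so the ``missing pivot'' case never occurs; and since $U$ is a product of matrices of the form $\id_n+uE_{\ell_1\ell_2}$, it already has $\det U=1$, so no rescaling is needed (which is fortunate, since rescaling $U$ would rescale $Q_3$ by $|c|^2$ and threaten the compactness of $\Omega_2$). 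Finally, your primary argument that $Q_3$ lies in a fixed compact set -- via ``bounded complexity'' plus bounded determinant -- does not work, because the complexity bound $O_{\Omega}((L_0^{D^k(DE)^j})^{O(1)})$ grows with $L_0$, exactly the delicacy you flagged; but your parenthetical alternative (each pivot is a ratio of nested leading principal minors of $Q_2$, uniformly bounded away from $0$ and $\infty$ on the compact set $\Omega_1\subset P_n$) is correct and is in substance the paper's argument, which phrases it by showing each elimination step strictly decreases one diagonal entry while fixing the rest and then bounds the entries below via the determinant.
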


\begin{proof}
We construct $U$ in a way such that $Q_3$ is a Gram--Schmidt orthogonalized form of $Q_2$, i.e.
\[
Q_3=U^* Q_2 U,
\]
where $U$ is an $\binom{n}{2}$ long composition of elementary base changes $\id_n+uE_{\ell_1\ell_2}$ for $\ell_1\neq \ell_2$, where $u$ is chosen to eliminate the entry at position $\ell_1,\ell_2$ (note that $U$ is not canonically determined, since we can eliminate the elements in any order, fix one once for all). Noting that each entry of $Q_2$ has complexity $O_{\Omega}((L_0^{D^k(DE)^j})^{O(1)})$, this proves \eqref{Q3-part-a} by Lemma~\ref{lemma:effective-computability}.

As for \eqref{Q3-part-b}, first we claim that in a Gram--Schmidt step applied to a positive definite matrix, one diagonal entry decreases in absolute value, and all the others remain the same. It suffices to check this for $2\times 2$ blocks, where we see the calculation
\[
\begin{pmatrix}
1 & 0 \\ -\frac{\overline{b}}{a} & 1
\end{pmatrix}
\begin{pmatrix}
a & b \\ -\overline{b} & c
\end{pmatrix}
\begin{pmatrix}
1 & -\frac{b}{a} \\ 0 & 1
\end{pmatrix}
=
\begin{pmatrix}
a & 0 \\ 0 & c - \frac{|b|^2}{a}
\end{pmatrix}.
\]
Here, $0<c-|b|^2/a<c$, so the claim is proven.

As a result, when we diagonalize any element of $\Omega_1$, the resulting diagonal matrix cannot have entry larger than the largest eigenvalue in $\Omega_1$, say, $\lambda$. But then the smallest eigenvalue cannot be smaller than $\Delta/\lambda^{n-1}$, where $\Delta$ is the smallest determinant attained in $\Omega_1$. Therefore, any Gram--Schmidt process applied to any element of $\Omega_1$ leads to positive definite matrices with eigevalues between $\Delta/\lambda^{n-1}$ and $\lambda$. The set of such matrices is a good choice for $\Omega_2$.

The rationality of $Q_3$ is obvious, since it is self-adjoint and with diagonal entries a priori in $\QQ(i)$. Then observe that the matrix $Q_3$ is computed in $O(1)$ many steps from $Q_2$, which verifies via Lemma~\ref{lemma:effective-computability} that the entries of $Q_3$ are indeed of complexity $O_{\Omega}((L_0^{D^k(DE)^j})^{O(1)})$. This proves \eqref{Q3-part-b}.

Also, put $m:=\den(U^{-1})\den(U)$ (which is $\den(U)^2$, since $\det(U)=1$). Then $m$ can be computed in $O(1)$ steps, and referring to Lemma~\ref{lemma:effective-computability}, we see that the complexity of $m$ is $O_{\Omega}((L_0^{D^k(DE)^j})^{O(1)})$ as claimed. In particular, we have $mU^{-1}\gamma U\in \ZZ[i]^{n\times n}$ for any $\gamma\in \SL_n(\ZZ[i])$.

Finally, if $\pi,\pi'\nmid m$, then the base change given by $U$ does not alter the $\pi$- and $\pi'$-parts of the Smith normal form. Hence if $\gamma\in S(Q_2,\pi^{\nu},\pi'^{\nu},\infty)$, then
\[
|\det(\gamma)|^{\frac{2}{n}}Q_3 = |\det(\gamma)|^{\frac{2}{n}} U^* Q_2 U = U^* \gamma^* Q_2 \gamma U  = (U^{-1}\gamma U)^* U^* Q_2 U (U^{-1}\gamma U) = (U^{-1}\gamma U)^* Q_3 (U^{-1}\gamma U)
\]
shows that $U^{-1}\gamma U\in S_m(Q_3,\pi^{\nu},\pi'^{\nu},\infty)$. Since the $U$-conjugation is a bijection, we obtain \eqref{Q3-part-c}.
\end{proof}

\section{The endgame}

Let $\eps>0$ be given as in the input of Proposition~\ref{prop:main}. Choose then $D\geq 2$ such that $D \eps/2>T+1$, where recall that $T\geq 1$ is an upper bound on all implicit constants in $O(1)$'s ever mentioned in the paper. Let then $E\geq 2$ as needed in Lemma~\ref{lemma:constructQ2}, and then $M\geq 2$ as implied by Lemma~\ref{lemma:constructQ1}. This $M$ will be the $M(\eps)$ of Proposition~\ref{prop:main}.

Now let $\Omega$, hence $\Omega',\Omega_1,\Omega_2$ be given. Let $L(\Omega)$ be large enough such that on the other hand $L(\Omega) \geq L_1(\Omega)$ of Lemmata~\ref{lemma:constructQ1}--\ref{lemma:constructQ2}, and on the other hand that for any $L_0\geq L(\Omega)$, and any $m$ implied by Lemma~\ref{lemma:constructQ3} (for any possible $1\leq j\leq n^2+1$ and $0\leq k\leq n^2$), $N(m)<L_0^{D^{k+1}(DE)^j}$, which can be achieved, since $N(m)=O_{\Omega}(L_0^{D^k(DE)^j})$. This in particular implies $\pi\nmid m$ for any $\pi\in \cP(L_0^{D^{k+1}(DE)^j},2L_0^{D^{k+1}(DE)^j})$. Note that at this point none of $j,k,m$ is fixed, but we have the claimed bounds and the non-divisibility relations on them.

Let then $Q\in\Omega'$ and $L_0\geq L(\Omega)$ be arbitrary. Then there exist some $1\leq j\leq n^2+1$ given by Lemma~\ref{lemma:constructQ1}, $0\leq k\leq n^2$ and $Q_2$ given by Lemma~\ref{lemma:constructQ2}, $Q_3$ and $m\in\ZZ[i]$ given by Lemma~\ref{lemma:constructQ3} with the properties given there. Let then $L:=L_0^{D^{k+1}(DE)^j}$, which satisfies the magnitude requirement of Proposition~\ref{prop:main} that $L_0\leq L\leq L_0^{\alpha(\eps)}$, where $\alpha(\eps)$ is a constant depending only on $\eps$ (we can take $\alpha(\eps)=D^{n^2+1}(DE)^{n^2+1}$). For any $\pi,\pi'\in\cP(L,2L)$ and any $1\leq \nu \leq n$, we have, combining Lemmata~\ref{lemma:constructQ2}--\ref{lemma:constructQ3}, that
\[
\#S(Q,\pi^{\nu},\pi'^{\nu},M) \leq \#S(Q_2,\pi^{\nu},\pi'^{\nu},\infty) \leq \#S_m(Q_3,\pi^{\nu},\pi'^{\nu},\infty).
\]
Therefore, it suffices to estimate the rightmost expression from above. We apply Lemma~\ref{lemma:spec-lemma-two-primes} for $\pi\neq \pi'$ to see the count is $0$, which proves \eqref{eq:prop-counting-two-primes}. When $\pi=\pi'$, then we apply Lemma~\ref{lemma:spec-lemma-one-prime} (with $\eps/2$ written in place of $\eps$ there) to see it is
\[
O_{\Omega}((L_0^{D^k(DE)^j})^{O(1)}) \cdot O_{\Omega,\eps} ((L_0^{D^{k+1}(DE)^j})^{\nu(n-1)+\frac{\eps}{2}}) = O_{\Omega,\eps}(L^{\nu(n-1)+\eps}),
\]
where the last bound holds by the choice of $D$ and by $N(\pi)< 2L$. This proves \eqref{eq:prop-counting-one-prime}.

The proof of Proposition~\ref{prop:main} and hence that of Theorem~\ref{thm:main} are complete.

\bibliographystyle{alpha}
\bibliography{main}

\end{document}